\definecolor{ForestGreen}{rgb}{0.1,0.6,0.05}
\definecolor{EgyptBlue}{rgb}{0.063,0.1,0.6}
\newtheorem{theorem}{Theorem}[section]
\newtheorem{proposition}{Proposition}[section]
\newtheorem{definition}{Definition}[section]
\newtheorem{lemma}{Lemma}[section]
\newtheorem{remark}{Remark}[section]
\newtheorem{cor}{Corollary}[section]
\numberwithin{equation}{section}
\numberwithin{theorem}{section}
\numberwithin{equation}{section}
\numberwithin{theorem}{section}
\DeclarePairedDelimiter\norm{\lVert}{\rVert}
\definecolor{ForestGreen}{rgb}{0.1,0.6,0.05}
\definecolor{EgyptBlue}{rgb}{0.063,0.1,0.6}
\subjclass[2010]{Primary  58J50; Secondary 35P15}
\title [Isoperimetric bounds for higher Steklov-Dirichlet eigenvalues]{Sharp bounds for higher Steklov-Dirichlet eigenvalues on domains with spherical holes}
\author{Sagar Basak$^1$ \and Anisa Chorwadwala$^2$ \and Sheela Verma$^*$}
\address{$1$ Department of Mathematical Sciences, Indian Institute of Technology (BHU) Varanasi, India.}
\email{sagarbasak.rs.mat22@itbhu.ac.in}
\address{$2$ Indian Institute of Science Education and Research, Pune, India.}
\email{anisa@iiserpune.ac.in}
\address{$*$  Corresponding author, Department of Mathematical Sciences, Indian Institute of Technology (BHU), Varanasi, India}
\email{sheela.mat@iitbhu.ac.in}
\keywords{Steklov-Dirichlet Eigenvalues, Doubly connected domains, Star shaped domains, symmetries, Space Forms}
\begin{document}

\begin{abstract}
We consider mixed Steklov-Dirichlet eigenvalue problem on smooth bounded domains in Riemannian manifolds. Under certain symmetry assumptions on multiconnected domains in $\mathbb{R}^{n}$ with a spherical hole, we obtain isoperimetric inequalities for $k$-th Steklov-Dirichlet eigenvalues for $2 \leq k \leq n+1$. We extend Theorem 3.1 of \cite{gavitone2023isoperimetric} from Euclidean domains to domains in space forms, that is, we obtain sharp lower and upper bounds of the first Steklov-Dirichlet eigenvalue on bounded star-shaped domains in the unit $n$-sphere and in the hyperbolic space. 
\end{abstract}

\maketitle

\section{Introduction}
Let $(\tilde{M}, \tilde{g})$ be a Riemannian manifold and let $D$ denote the Levi–Civita connection of $(\tilde{M}, \tilde{g})$. For a smooth vector field $X$ on $\tilde{M}$ the divergence $div(X)$ is defined as $trace(DX)$.
For a smooth function $f : \tilde{M} \rightarrow \mathbb{R}$, the gradient $\nabla f$ is defined by $\tilde{g}( \nabla f (p), v) = df (p)(v)~ (p \in \tilde{M}, v \in T_p\tilde{M})$ and the Laplace–Beltrami operator $\Delta$ is defined by $\Delta f = div(\nabla f )$. 
Throughout this paper, $dV$
denotes the volume element of $(\tilde{M}, \tilde{g})$.

 The Riemannian manifolds we consider in this article are going to be space forms, that is, the complete simply connected Riemannian manifolds with constant sectional curvature. They are (i) the Euclidean space $\mathbb{E}^n$, (ii) the unit sphere $S^n = \{(x_1, x_2,... ,x_{n+1}) \in \mathbb{R}^{n+1} \, | \, \sum_{i=1}^{n+1} x_i^2 = 1\}$ with induced Riemannian metric $<  , >$ from the Euclidean space $\mathbb{R}^{n+1}$, and (iii) the hyperbolic space $\mathbb{H}^n := \{(x_1, x_2, \ldots ,x_{n+1}) \in \mathbb{R}^{n+1} \,| \,
\sum_{i=1}^n x_i^2 - x_{n+1}^2 = −1 ~\mbox{ and }~ x_{n+1} > 0\}$ with
the Riemannian metric induced from the quadratic form $(x, y) := \sum_{i=1}^n x_i\, y_i - x_{n+1}\, y_{n+1}, ~\mbox{
where } x = (x_1, x_2,... ,x_{n+1}) \mbox{ and } y = (y_1, y_2,... ,y_{n+1})$.

Finding sharp bounds for Steklov-Dirichlet eigenvalues among a constrained family of doubly connected domains is a classical problem which has caught attention of many authors \cite{hong2023first, michetti2022steklov} owing to its physical significance. In this article, we consider mixed Steklov-Dirichlet problem on domains with a spherical hole. Let  $\tilde{\Omega}$ 
be a bounded domain in the Riemannian manifold $
\Tilde{
M
}$
with smooth boundary $\partial \tilde{
\Omega
}
$. Let $B_{r} \subset 
\Tilde{
M
}
$ be a ball of radius ${r}$ such that $\overline{B_{r}} \subset 
\tilde{
\Omega
}
$. Consider the following Steklov-Dirichlet eigenvalue problem on $\Omega = \tilde{\Omega} \setminus \overline{B_{r}}$.
\begin{align} \label{eqn: SD problem}
\begin{cases}
        \Delta u=0 & \text{in} \,\, \Omega,\\
        u=0 & \text{on}\,\, \partial B_{r},\\
        \frac{\partial u}{\partial \nu }=\sigma u & \text{on} \,\, \partial \tilde{\Omega}  
    \end{cases}    
\end{align}
It is known \cite{agranovich2006mixed} that this problem has discrete spectrum 
\begin{align*}
    0 < \sigma_1 \leq \sigma_2 \leq \sigma_3 \leq \cdots \nearrow \infty,
\end{align*}
counted with multiplicity.

Various bounds for the first Steklov-Dirichlet eigenvalue relating the geometry of the underlying domains have been obtained for bounded domains in $\mathbb{R}^{2}$. Using the theory of conformal mapping, Hersch and Payne \cite{hersch1968extremal} obtained sharp upper bounds for the first Steklov-Dirichlet eigenvalue on planar doubly connected bounded domains. In \cite{dittmar2000mixed}, using circular symmetrization and a distortion theorem for conformal mappings of an annulus, lower bounds for the first Steklov-Dirichlet eigenvalue were derived for doubly connected planar domains. For bounded domains in $\mathbb{R}^{n}$, Banuelos et al. \cite{banuelos2010eigenvalue} obtained some classical inequalities comparing eigenvalues of the mixed Steklov–Dirichlet problem and mixed Steklov–Neumann problem.

Many isoperimetric bounds and monotonicity results for the first Steklov-Dirichlet eigenvalue have been derived for various domains in $\mathbb{R}^{n}$ \cite{gavitone2023monotonicity,hong2022first}. For instance, in \cite{verma2020eigenvalue} it was proved that, if $n \geq 3$ and $\tilde{\Omega}$ is a ball of fixed radius, the first Steklov-Dirichlet eigenvalue $\sigma_{1}(\Omega)$ is maximum when $\tilde{\Omega}$ and $B_{r}$ are concentric. This result was later proved for planar annular domains and for domains contained in two-point homogeneous spaces \cite{ftouhi2022place,seo2021shape}. Generalising this, Paoli et al \cite{paoli2021stability} proved that concentric annular domain locally minimizes first Steklov-Dirichlet eigenvalue when $\tilde{\Omega}$ varies over nearly spherical sets of fixed volume. Further, if $\tilde{\Omega}$ is an open, bounded and convex set contained in a suitable ball, Gavitone et al \cite{gavitone2023isoperimetric} proved that, under a volume constraint, $\sigma_{1}(\Omega)$ is maximum for concentric annular domain. In particular, they proved the following result.

\begin{theorem} [Theorem 1.1, \cite{gavitone2023isoperimetric}] \label{Thm:upper bound in convex domains}
 Fix $n \geq 2$. Let $\tilde{\Omega}$ be a bounded convex open set in $\mathbb{R}^{n}$. Let $B_r$ be a ball in $\mathbb{R}^{n}$ of radius $r$ such that $\overline{B_r} \subset \tilde{\Omega} \subset B_{\Bar{r}}$ where $B_{\Bar{r}} $ is the ball in $\mathbb{R}^{n}$, centered at the origin, with radius $\Bar{r}$ defined, in terms of $r$ and $n$, as
 \begin{equation*}
     \Bar{r} = 
     \begin{cases}
     \begin{aligned}
& r e^{\sqrt{2}} & \text{ if }  n=2,\\
& r \left[ \frac{(n-1) + (n-2) \sqrt{2(n-1)}}{n-1} \right]^{\frac{1}{n-2}} & \text{ if }  n \geq 3.
 \end{aligned}
     \end{cases}
 \end{equation*}
Then, 
$\sigma_1 (\tilde{\Omega} \backslash \overline{B_r}) \leq \sigma_1 (B_{R,r})$,  
where $B_{R,r}$ is the concentric annular domain with inner radius $r$ and outer radius $R$ such that Vol$(\tilde{\Omega} \backslash \overline{B_r})$ = Vol$(B_{R,r})$.
\end{theorem}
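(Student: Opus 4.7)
The plan is to deduce the inequality from the variational characterisation
\begin{equation*}
\sigma_1(\tilde\Omega\setminus\overline{B_r}) \;=\; \inf_{v}\, \frac{\intO |\nabla v|^2\,dx}{\int_{\partial\tilde\Omega} v^2\,dS},
\end{equation*}
where the infimum is over $v\in H^1(\Omega)$ vanishing on $\partial B_r$ and nonzero on $\partial\tilde\Omega$, by plugging in a carefully chosen radial test function. First I would place the origin at the common centre of $B_r$ and $B_{\bar r}$ and introduce the radial harmonic function $\phi$ on $(r,\infty)$ normalised by $\phi(r)=0$: explicitly $\phi(\rho)=\log(\rho/r)$ when $n=2$ and $\phi(\rho)=r^{2-n}-\rho^{2-n}$ when $n\geq 3$. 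Its restriction to $[r,R]$ is, up to a positive multiple, the first Steklov--Dirichlet eigenfunction on $B_{R,r}$, with eigenvalue $\phi'(R)/\phi(R)$; the test function will be $u(x)=\phi(|x|)$.

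Since $\Delta u=0$ in $\Omega$ and $u=0$ on $\partial B_r$, Green's identity reduces the Dirichlet energy to a single boundary integral,
\begin{equation*}
\intO |\nabla u|^2\,dx \;=\; \int_{\partial\tilde\Omega} \phi(|x|)\phi'(|x|)\,\frac{\langle x,\nu\rangle}{|x|}\,dS.
\end{equation*}
Convexity of $\tilde\Omega$ and the containment $\overline{B_r}\subset\tilde\Omega$ force $\tilde\Omega$ to be star-shaped with respect to the origin, so I can parametrise $\partial\tilde\Omega$ as $\{R(\omega)\omega:\omega\in S^{n-1}\}$ with $R\leq\bar r$. Standard identities then give $\langle x,\nu\rangle\,dS/|x|=R^{n-1}\,d\omega$ and $dS\geq R^{n-1}\,d\omega$, so that the Rayleigh quotient of $u$ is bounded above by
\begin{equation*}
\mathcal R \;:=\; \frac{\int_{S^{n-1}} \phi(R)\phi'(R)\,R^{n-1}\,d\omega}{\int_{S^{n-1}} \phi(R)^2\,R^{n-1}\,d\omega}.
\end{equation*}
Let $R^*$ denote the outer radius of the equivolume annulus $B_{R^*,r}$, which by star-shapedness satisfies $\int_{S^{n-1}} R^n\,d\omega=\omega_{n-1}(R^*)^n$, and set $c_0:=\phi'(R^*)/\phi(R^*)=\sigma_1(B_{R^*,r})$. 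The theorem then reduces to the scalar inequality $\int_{S^{n-1}} F(R(\omega))\,d\omega\leq 0$, where
\begin{equation*}
F(\rho) \;:=\; \rho^{n-1}\phi(\rho)\bigl[\phi'(\rho)-c_0\phi(\rho)\bigr].
\end{equation*}

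The final step is a Jensen-type argument. Substituting $T=\rho^n/n$ and writing $\tilde F(T):=F(\rho)$, the volume constraint becomes $\int T\,d\omega=\omega_{n-1}(R^*)^n/n$, while $F(R^*)=0$ by definition of $c_0$; Jensen's inequality therefore delivers the required bound provided $\tilde F$ is concave on $[r^n/n,\bar r^n/n]$, equivalently $\rho F''(\rho)\leq (n-1)F'(\rho)$ for $\rho\in[r,\bar r]$. The main technical obstacle will be this algebraic verification. I expect $\rho F''(\rho)-(n-1)F'(\rho)$ to split as a $c_0$-independent piece, manifestly negative on $(r,\infty)$, plus a term $c_0\,G(\rho)$ in which $G$ is negative on $[r,\bar r)$ and vanishes at $\bar r$, which would establish the concavity uniformly in the admissible range of $R^*$. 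Solving $G(\bar r)=0$ in dimension $n\geq 3$ should reduce, after the substitution $x=(\bar r/r)^{n-2}$, to the quadratic $(n-1)x^2-2(n-1)x-(n-3)(2n-3)=0$ whose positive root is precisely $[(n-1)+(n-2)\sqrt{2(n-1)}]/(n-1)$; the planar case collapses analogously to $\log^2(\bar r/r)=2$, that is $\bar r=re^{\sqrt 2}$, in exact agreement with the statement.
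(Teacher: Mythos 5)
The paper offers no proof of this statement --- it is quoted verbatim as Theorem 1.1 of \cite{gavitone2023isoperimetric} and used as a black box (the authors only remark in Section \ref{sec:remark on star shaped} that the same argument extends to star-shaped domains) --- so the only meaningful comparison is against the cited source, whose strategy your outline reproduces. Your skeleton is correct: the radial harmonic test function $u=\phi(|x|)$, the divergence-theorem identity $\int_\Omega\|\nabla u\|^2\,dx=\int_{S^{n-1}}\phi(R)\phi'(R)R^{n-1}\,d\omega$ together with $dS\ge R^{n-1}\,d\omega$ on the star-shaped outer boundary, and Jensen's inequality in the volume variable $T=\rho^n/n$ against the equivolume radius $R^*$. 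The step you defer also checks out exactly as you predict: writing $F=P-c_0Q$ with $P=\rho^{n-1}\phi\phi'$ and $Q=\rho^{n-1}\phi^2$, one computes $\frac{d^2P}{dT^2}=-2(n-1)(n-2)^2\rho^{2-3n}<0$, while the mixed terms in $\frac{d^2Q}{dT^2}$ cancel, leaving $\frac{d^2Q}{dT^2}=\rho^{1-n}\left[2(n-2)^2\rho^{2-2n}-(n-1)\rho^{-2}\phi^2\right]$, which is nonnegative precisely when $(n-1)(x-1)^2\le 2(n-2)^2$ with $x=(\rho/r)^{n-2}$, i.e.\ precisely for $\rho\le\bar r$ (for $n=2$ the condition becomes $\log^2(\rho/r)\le 2$); your quadratic is this relation expanded, with the stated positive root. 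Since $c_0>0$, concavity of $\tilde F$ on $[r^n/n,\bar r^n/n]$ holds uniformly in $c_0$, $F(R^*)=0$ by the choice of $c_0=\sigma_1(B_{R^*,r})$, and the proof closes. The only points worth making explicit in a final write-up are that the first eigenvalue of the annulus is indeed achieved by the radial mode $\phi$ (so that $c_0$ really equals $\sigma_1(B_{R^*,r})$), and that $R^*\in[r,\bar r]$ so that Jensen is applied within the interval of concavity.
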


In \cite{gavitone2023isoperimetric}, the authors have also obtained sharp lower and upper bounds for $\sigma_{1}(\Omega)$ in terms of the minimal and maximal distances between the center of $\tilde{\Omega}$ and the outer boundary of $\tilde{\Omega}$.

In this article, we generalise the above mentioned results in two directions: (1) To find sharp lower and upper bounds for doubly connected star shaped domains in non-Euclidean space forms, 
(2) To find optimal shapes for higher Steklov-Dirichlet eigenvalues (Theorem \ref{Thm:higher eigenvalues}) under a volume constraint and with certain symmetry assumptions on Euclidean domains. 

This article is organised as follows: In Section \ref{sec:star shaped in constant curvature}, we discuss the geometry of star shaped domains in non-Euclidean space forms.
We then derive sharp bounds for the first Steklov-Dirichlet eigenvalue on doubly connected star shaped domains with a spherical hole. In section \ref{sec: SD on annular domain}, we study the Steklov-Dirichlet eigenvalues, the corresponding eigenfunctions and their behaviour in annular domains. Certain integral inequalities on domains with given symmetry assumptions are proved in Section \ref{sec:integral inequalities}. In Section \ref{sec: higher eigenvalues}, we prove isoperimetric bounds for higher eigenvalues. Finally, in Section \ref{sec:remark on star shaped}, we state a generalisation of Theorem \ref{Thm:upper bound in convex domains} for star shaped domains (Theorem \ref{Thm:upper bound}). This theorem can be proved following the ideas in \cite{gavitone2023isoperimetric}. In this section, we also mention some extensions of our results in different directions for future work. 
\section{Sharp bounds for the first Steklov-Dirichlet eigenvalue on star shaped domains} \label{sec:star shaped in constant curvature}
Let $(\tilde{M},\tilde{g})$ be a complete Riemannian manifold and let $
\tilde{\Omega} 
\subset 
\Tilde{M}
$ be a bounded domain having smooth boundary $\partial \tilde{\Omega}$. We further assume that $
\tilde{\Omega}$ is a star shaped domain with respect to 
a point $p \in 
\tilde{
\Omega
}
$.  Then, for every $u \in T_{p}
\Tilde{
M
}
$, there exists a unique point $q \in \partial \tilde{
\Omega
}
$ such that $q = \exp_{p}(r_u u)$ for some $r_u > 0$. Thus $
\tilde{
\Omega
}
$ and $\partial \tilde{
\Omega}
$ can be represented as
\begin{equation*}
\partial 
\tilde{
\Omega
} 
= \{ \exp_{p}(r_u u) \, | \, u \in T_{p}\Tilde{M}, \norm{u} = 1 \},
\end{equation*}
\begin{equation*}
\tilde{
\Omega
}
= \{ \exp_{p}(t u) \, | \, u \in T_{p}
\Tilde{
M
}
, \norm{u} = 1, 0 \leq t < r_u \}.
\end{equation*}
Define $\displaystyle R_m := \min_{u} r_u$ and $\displaystyle R_M := \max_{u} r_u$.

Let $\partial_r$ denote the radial vector field emanating from $p$, and let $\nu$ denote the unit outward normal to $\partial\tilde{\Omega}$. For any point $q \in \partial \tilde{\Omega}$, define $\cos \theta_u := \cos (\theta(q)) = \left\langle \nu(q), \partial_r(q) \right\rangle_{\tilde{g}}$. Since $\tilde{\Omega}$ is a star-shaped bounded domain, $\cos (\theta(q)) > 0$ and therefore $\theta(q) < \frac{\pi}{2}$ for all $q \in \partial\tilde{\Omega}$. By the compactness assumption of $\partial \tilde{\Omega}$, 
\begin{equation}\label{a}
\mbox{there exists }\; \alpha ~\mbox{ such that } 0 \leq \theta_u =\theta(q) \leq \alpha  < \frac{\pi}{2} \;  \mbox{ for all }~\; q \in \partial\tilde{\Omega} . ~ \mbox{ Let } ~ a : = \tan^{2} \alpha.
\end{equation}
If $\Tilde{M}$ is a space form then 
$\displaystyle \tan^{2} (\theta(q))= \frac{\|\overline{\nabla} r_{u}\|^2}{\sin_{\Tilde{M}}^{2}(r_{u})}\, \forall \, q \in \partial\Omega$. Here, $\overline{\nabla}r_u$ represents the tangential component of ${\nabla}r_u$, the gradient of $r_u$, and 
\begin{align*}
\sin_{\Tilde{M}}r :=
\begin{cases}
\sin r & \text{ for } r \in [0, \pi] \quad \text{ when }\Tilde{M} 
= 
S
^{n},  \\
r & \text{ for } r \in [0, \infty) \quad \text{ when }
\Tilde{
M} 
= \mathbb{R}^{n},  \\
\sinh r & \text{ for } r \in [0, \infty) \quad \text{ when }
M
= \mathbb{H}^{n}.
\end{cases}
\end{align*} 
Let $U_{p}\Tilde{
M
} 
:= \{u \in T_{p}\Tilde{
M
}
\, | \, \norm{u} = 1 \}$. For $p \in \tilde{M}$, and $r  > 0$, let $B_{r}(p)$ 
 denote the open geodesic ball in $\tilde{M}$ with center $p$ and radius ${r}$. When there is no confusion about the center $p$, we can denote $B_r(p)$ by $B_r$. For $\tilde{M}=S^n$, we take $r \in (0, \pi
 ]$. Clearly, for any $p \in S^2$, $B_{\frac{\pi}{2}}(p)$ is nothing but the open hemisphere of $S^n$ centered at $p$.
\begin{theorem} \label{Thm:lower bound}
Let $\tilde{M
 } 
 = \mathbb{R}^{n}, S^{n}$ or $\mathbb{H}^{n}$.
 Let $p \in \tilde{M}$, $r_1>0$ and $\tilde{\Omega} \subset \tilde{M}$ 
  be such that $\overline{B_{r_1}(p)} \subset \tilde{
 \Omega
 }
 $, and $\tilde{
 \Omega
 }
 $ is a star shaped domain with respect to $p$. 
 If $\Tilde{
 M
 } 
 = 
 S
 ^{n}$, we further assume that $\tilde{
 \Omega
 }
 \subset B_{\frac{\pi}{2}}(p)$. Then for $\Omega:= \tilde{\Omega} \backslash \overline{B_{r_1}}$, we have the following bounds for $\sigma_1 (\Omega)$ in terms of $a, R_M, R_m$, $\sigma_1 ( B_{R_m} \backslash \overline{B_{r_1}})$ and $\sigma_1 ( B_{R_M} \backslash \overline{B_{r_1}})$,
\begin{equation*}
     \begin{aligned}
 \left( \frac{1}{\sqrt{1+a}}\right) \left(\frac{\sin_{\Tilde{M}}^{n-1}(R_m)}{\sin_{\Tilde{M}}^{n-1}(R_M)}\right)  \sigma_1 ( B_{R_m} \backslash \overline{B_{r_1}}) \leq  \sigma_1 (\Omega) \leq  \left(\frac{\sin_{\Tilde{M}}^{n-1}\left(R_{M}\right)}{\sin_{\Tilde{M}}^{n-1}\left(R_{m}\right)} \right)  \sigma_1 ( B_{R_M} \backslash \overline{B_{r_1}}),  
     \end{aligned}
\end{equation*}
where $B_{R_M}$ and $B_{R_m}$ are balls centered at $p$ with radii $R_M$ and $R_m$, respectively. 
\end{theorem}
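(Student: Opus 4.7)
Both inequalities will follow from the variational characterisation
\[
\sigma_1(\Omega)=\inf\left\{\frac{\int_\Omega|\nabla v|^2\,dV}{\int_{\partial\tilde\Omega}v^2\,dS}\;:\;v\in H^1(\Omega),\ v=0\text{ on }\partial B_{r_1},\ v\not\equiv 0\right\}
\]
combined with the explicit form of the radial Steklov--Dirichlet eigenfunction on a concentric annulus $B_R\setminus\overline{B_{r_1}}$. Writing $\Delta$ in geodesic polar coordinates about $p$, the radial harmonic functions vanishing on $\partial B_{r_1}$ are scalar multiples of
\[
\phi(\rho)\;:=\;\int_{r_1}^{\rho}\frac{ds}{\sin_{\tilde M}^{n-1}(s)},\qquad \sigma_1(B_R\setminus\overline{B_{r_1}})=\frac{\phi'(R)}{\phi(R)}=\frac{1}{\sin_{\tilde M}^{n-1}(R)\,\phi(R)}.
\]
A direct differentiation, using that $\cos_{\tilde M}\geq 0$ on the relevant range (guaranteed by $\tilde\Omega\subset B_{\pi/2}(p)$ in the spherical case), shows that $\rho\mapsto \phi^2(\rho)\sin_{\tilde M}^{n-1}(\rho)$ is strictly increasing.

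\textbf{Upper bound.} I would take $v(q):=\phi(\rho(q))$ as a test function in the Rayleigh quotient for $\sigma_1(\Omega)$. Since $\Omega\subset B_{R_M}\setminus\overline{B_{r_1}}$ and $|\nabla v|=|\phi'(\rho)|$, positivity of the integrand gives
\[
\int_{\Omega}|\nabla v|^2\,dV\;\leq\;\int_{B_{R_M}\setminus\overline{B_{r_1}}}|\phi'(\rho)|^2\,dV\;=\;\sigma_1\!\left(B_{R_M}\setminus\overline{B_{r_1}}\right)\,\omega_{n-1}\sin_{\tilde M}^{n-1}(R_M)\,\phi(R_M)^2.
\]
For the denominator, parametrising $\partial\tilde\Omega=\{\exp_p(r_u u):u\in U_p\tilde M\}$ yields the surface element $dS=\sin_{\tilde M}^{n-1}(r_u)(\cos\theta_u)^{-1}du$; combining $\cos\theta_u\leq 1$, the inequality $r_u\geq R_m$, and the monotonicity of $\phi^2\sin_{\tilde M}^{n-1}$ produces a lower bound on $\int_{\partial\tilde\Omega}v^2\,dS$ in terms of $\sin_{\tilde M}^{n-1}(R_m)$. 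Substituting the explicit eigenvalue formula, the remaining algebra cancels the $\phi$ factors and leaves exactly the ratio $\sin_{\tilde M}^{n-1}(R_M)/\sin_{\tilde M}^{n-1}(R_m)$ in front of $\sigma_1(B_{R_M}\setminus\overline{B_{r_1}})$.

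\textbf{Lower bound.} Let $u>0$ be a first Steklov--Dirichlet eigenfunction of $\Omega$. Since $u|_{\partial B_{r_1}}=0$ and $B_{R_m}\setminus\overline{B_{r_1}}\subset\Omega$, the restriction of $u$ is an admissible competitor for $\sigma_1(B_{R_m}\setminus\overline{B_{r_1}})$, giving
\[
\sigma_1\!\left(B_{R_m}\setminus\overline{B_{r_1}}\right)\int_{\partial B_{R_m}}u^2\,dS\;\leq\;\int_{B_{R_m}\setminus\overline{B_{r_1}}}|\nabla u|^2\,dV\;\leq\;\int_\Omega|\nabla u|^2\,dV\;=\;\sigma_1(\Omega)\int_{\partial\tilde\Omega}u^2\,dS.
\]
The problem thus reduces to showing
\[
\int_{\partial B_{R_m}}u^2\,dS\;\geq\;\frac{1}{\sqrt{1+a}}\cdot\frac{\sin_{\tilde M}^{n-1}(R_m)}{\sin_{\tilde M}^{n-1}(R_M)}\int_{\partial\tilde\Omega}u^2\,dS.
\]
Writing both boundary integrals in direction coordinates, $\int_{\partial\tilde\Omega}u^2\,dS=\int_{U_p\tilde M}u(\exp_p(r_u u))^2\sin_{\tilde M}^{n-1}(r_u)\sec\theta_u\,du$ and $\int_{\partial B_{R_m}}u^2\,dS=\sin_{\tilde M}^{n-1}(R_m)\int_{U_p\tilde M}u(\exp_p(R_m u))^2\,du$, the factor $\sqrt{1+a}$ arises from the angle bound $\sec\theta_u\leq\sqrt{1+a}$ coming from \eqref{a}, the factor $\sin_{\tilde M}^{n-1}(R_M)$ from $\sin_{\tilde M}^{n-1}(r_u)\leq\sin_{\tilde M}^{n-1}(R_M)$, and a direction-wise comparison of $u(\exp_p(R_m u))$ with $u(\exp_p(r_u u))$ is obtained by using the harmonicity of $u$ along the radial geodesic joining these two points.

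\textbf{Expected main obstacle.} The delicate part is the direction-by-direction comparison of the values of $u$ on the inscribed sphere $\partial B_{R_m}$ and on the outer boundary $\partial\tilde\Omega$; all of the quantitative strength of the lower bound is extracted there, and harmonicity together with the star-shapedness must be combined precisely to produce the claimed factor. In the upper bound the corresponding technical point is ensuring that the monotonicity of $\phi^2\sin_{\tilde M}^{n-1}$ and the angle estimate compose so as to leave only the pure geometric ratio $\sin_{\tilde M}^{n-1}(R_M)/\sin_{\tilde M}^{n-1}(R_m)$, with all $\phi$-dependence cancelling against the explicit expression for $\sigma_1(B_{R_M}\setminus\overline{B_{r_1}})$.
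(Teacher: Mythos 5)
Your overall strategy is the same as the paper's: test the Rayleigh quotient of $\Omega$ with the first eigenfunction of the large annulus $B_{R_M}\setminus\overline{B_{r_1}}$ for the upper bound, restrict the first eigenfunction of $\Omega$ to the small annulus $B_{R_m}\setminus\overline{B_{r_1}}$ for the lower bound, and in both cases pass to polar coordinates about $p$ and use $1\le\sec\theta_u\le\sqrt{1+a}$ together with $R_m\le r_u\le R_M$. But both halves of your argument are left with a genuine gap at the same place: the comparison of a boundary integral over $\partial\tilde\Omega$ (where the radius is the variable $r_u$) with a boundary integral over a concentric sphere of fixed radius. In the upper bound, the cancellation you announce does not occur. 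Writing $\omega_{n-1}$ for the volume of $U_p\tilde M$, your numerator estimate gives $\int_\Omega\|\nabla v\|^2\,dV\le\sigma_1(B_{R_M}\setminus\overline{B_{r_1}})\,\omega_{n-1}\sin_{\Tilde{M}}^{n-1}(R_M)\,\phi(R_M)^2$, while the only lower bound your ingredients yield for the denominator is $\int_{\partial\tilde\Omega}v^2\,dS\ge\omega_{n-1}\sin_{\Tilde{M}}^{n-1}(R_m)\,\phi(R_m)^2$, because the monotonicity of $\phi^2\sin_{\Tilde{M}}^{n-1}$ bounds $\phi(r_u)^2\sin_{\Tilde{M}}^{n-1}(r_u)$ below by its value at $R_m$, not at $R_M$. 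The resulting quotient is $\frac{\sin_{\Tilde{M}}^{n-1}(R_M)}{\sin_{\Tilde{M}}^{n-1}(R_m)}\left(\frac{\phi(R_M)}{\phi(R_m)}\right)^2\sigma_1(B_{R_M}\setminus\overline{B_{r_1}})$; the factor $\left(\phi(R_M)/\phi(R_m)\right)^2\ge 1$ survives, so you only obtain a strictly weaker inequality than the one claimed whenever $R_m<R_M$.

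In the lower bound, the reduction to $\int_{\partial B_{R_m}}u^2\,dS\ge\frac{1}{\sqrt{1+a}}\,\frac{\sin_{\Tilde{M}}^{n-1}(R_m)}{\sin_{\Tilde{M}}^{n-1}(R_M)}\int_{\partial\tilde\Omega}u^2\,dS$ is correct and parallels the paper, but the step you defer to the end --- ``a direction-wise comparison of $u(\exp_p(R_m u))$ with $u(\exp_p(r_u u))$ obtained by using the harmonicity of $u$ along the radial geodesic'' --- is not an argument. After the factors $\sec\theta_u\le\sqrt{1+a}$ and $\sin_{\Tilde{M}}^{n-1}(r_u)\le\sin_{\Tilde{M}}^{n-1}(R_M)$ are spent, what remains to be shown is $\int_{U_p\Tilde{M}}u(\exp_p(R_m u))^2\,du\ge\int_{U_p\Tilde{M}}u(\exp_p(r_u u))^2\,du$. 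Harmonicity gives no pointwise monotonicity of $u$ along radial geodesics, and the radial model (where $u$ is close to the increasing function $\phi$ when $\tilde\Omega$ is nearly a ball) suggests the opposite inequality, since $r_u\ge R_m$ and $u$ vanishes on the inner sphere. So all of the quantitative content of the theorem sits in exactly the step you have not supplied; this is also precisely the point where the paper's own proof does its work, by replacing $f^2$ evaluated at $\exp_p(r_u u)$ with $f^2$ evaluated at $\exp_p(R_m u)$ (respectively $\exp_p(R_M u)$) inside the directional integral. A complete write-up must either justify that replacement or take a different route around it.
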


\begin{proof}
Let $f$ be a positive eigenfunction of (\ref{eqn: SD problem}) corresponding to $\sigma_1 (\Omega)$. Then $\|\nabla f\|^2 = \left(\frac{\partial f}{\partial r} \right)^2 + \frac{1}{\sin_{\Tilde{M}}^{2}r }\|\overline{\nabla} f\|^2$.
Therefore,
\begin{align*}
\int_\Omega{\|\nabla f\|^2}\, dV & = \int_{U_{p}\Tilde{M}} \int_{r_1}^{r_{u}} \left[ \left(\frac{\partial f}{\partial r} \right)^2 + \frac{1}{\sin_{\Tilde{M}}^{2}r }\|\overline{\nabla} f\|^2\right] \sin_{\Tilde{M}}^{n-1}r \, dr\, du \\
& \geq \int_{U_{p}\Tilde{M}} \int_{r_1}^{R_{m}} \left[ \left(\frac{\partial f}{\partial r} \right)^2 + \frac{1}{\sin_{\Tilde{M}}^{2}r }\|\overline{\nabla} f\|^2\right] \sin_{\Tilde{M}}^{n-1}r \, dr\, du = \int_{B_{R_m} \backslash \overline{B_{r_1}}}{\|\nabla f\|^2}\, dV,
\end{align*}
and
\begin{align*}
&~~~~~ \int_{\partial\Omega}{f^2}\, dS = \int_{u \in U_{p}\Tilde{M}} f^2 \, \sec (\theta_u)\, \sin_{\Tilde{M}}^{n-1}\left(r_{u}\right)\, du  & \leq \sec (\alpha) \int_{U_{p}\Tilde{M}} f^2 \, \sin_{\Tilde{M}}^{n-1}\left(r_{u}\right)\, du ~~~~~~~~~~~~~~\\
& \leq  \frac{\sqrt{1+a} \ \sin_{\Tilde{M}}^{n-1}\left(R_{M}\right)}{\sin_{\Tilde{M}}^{n-1}\left(R_{m}\right)} \int_{U_{p}\Tilde{M}} f^2 \, \sin_{\Tilde{M}}^{n-1}\left(R_{m}\right)\, du& = \frac{\sqrt{1+a} \ \sin_{\Tilde{M}}^{n-1}\left(R_{M}\right)}{\sin_{\Tilde{M}}^{n-1}\left(R_{m}\right)} \int_{\partial (B_{R_m} \backslash \overline{B_{r_1}})}{f^2}\, dS.
\end{align*}
Thus,
\begin{align*}
\frac{\sin_{\Tilde{M}}^{n-1}\left(R_{m}\right)}{\sqrt{1+a} \ \sin_{\Tilde{M}}^{n-1}\left(R_{M}\right)} \sigma_1 (B_{R_m} \backslash \overline{B_{r_1}}) \leq \frac{\sin_{\Tilde{M}}^{n-1}\left(R_{m}\right)}{\sqrt{1+a} \ \sin_{\Tilde{M}}^{n-1}\left(R_{M}\right)} \frac{\int_{B_{R_m} \backslash \overline{B_{r_1}}}{\|\nabla f\|^2}\, dV}{\int_{\partial (B_{R_m} \backslash \overline{B_{r_1}})}{f^2}\, dS} \leq  \frac{\displaystyle \int_\Omega{\|\nabla f\|^2}\, dV }{\displaystyle \int_{\partial\Omega}{f^2}\, dS} = \sigma_1 (\Omega).
\end{align*}

Similarly, let $h$ be the positive eigenfunction corresponding to $\sigma_1 (B_{R_M} \backslash \overline{B_{r_1}})$. Then 
\begin{align*}
\int_\Omega{\|\nabla h\|^2}\, dV & = \int_{U_{p}\Tilde{M}} \int_{r_1}^{r_{u}} \left[ \left(\frac{\partial h}{\partial r} \right)^2 + \frac{1}{\sin_{\Tilde{M}}^{2}r }\|\overline{\nabla} h\|^2\right] \sin_{\Tilde{M}}^{n-1}r \, dr\, du \\
& \leq \int_{U_{p}\Tilde{M}} \int_{r_1}^{R_{M}} \left[ \left(\frac{\partial h}{\partial r} \right)^2 + \frac{1}{\sin_{\Tilde{M}}^{2}r }\|\overline{\nabla} h\|^2\right] \sin_{\Tilde{M}}^{n-1}r \, dr\, du = \int_{B_{R_M} \backslash \overline{B_{r_1}}}{\|\nabla h\|^2}\, dV,
\end{align*}
and,
\begin{align*}
\int_{\partial\Omega}{h^2}\, dS & = \int_{U_{p}\Tilde{M}} h^2 \, \sec (\theta_u)\, \sin_{\Tilde{M}}^{n-1}\left(r_{u}\right) \, du \geq \int_{U_{p}\Tilde{M}} h^2 \, \sin_{\Tilde{M}}^{n-1}\left(r_{u}\right) \, du \\
& \geq   \frac{\sin_{\Tilde{M}}^{n-1}\left(R_{m}\right)}{\sin_{\Tilde{M}}^{n-1}\left(R_{M}\right)} \int_{U_{p}\Tilde{M}} h^2 \, \sin_{\Tilde{M}}^{n-1}\left(R_{M}\right)\, du \\
& =  \frac{\sin_{\Tilde{M}}^{n-1}\left(R_{m}\right)}{\sin_{\Tilde{M}}^{n-1}\left(R_{M}\right)} \int_{\partial (B_{R_M} \backslash \overline{B_{r_1}})}{h^2}\, dS.
\end{align*}
Thus,
\begin{align*}
 \sigma_1 (\Omega) \leq \frac{\displaystyle \int_\Omega{\|\nabla h\|^2}\, dV }{\displaystyle \int_{\partial\Omega}{h^2}\, dS} \leq \frac{\sin_{\Tilde{M}}^{n-1}\left(R_{M}\right)}{\sin_{\Tilde{M}}^{n-1}\left(R_{m}\right)} \frac{\int_{B_{R_M} \backslash \overline{B_{r_1}}}{\|\nabla h\|^2}\, dV}{\int_{\partial (B_{R_M} \backslash \overline{B_{r_1}})}{h^2}\, dS} = \frac{\sin_{\Tilde{M}}^{n-1}\left(R_{M}\right)}{\sin_{\Tilde{M}}^{n-1}\left(R_{m}\right)}  \sigma_1 ( B_{R_M} \backslash \overline{B_{r_1}}).
\end{align*}
\end{proof}

\section{The Steklov-Dirichlet problem on concentric annular domains} \label{sec: SD on annular domain}
In this section, we study some properties of the mixed Steklov-Dirichlet eigenvalues and the corresponding eigenfunctions on concentric annular domais in $\mathbb{R}^{n}$. We first use separation of variables method in order to compute the eigenfunctions, where in we need the following proposition which describes the eigenvalues and the eigenfunctions of $\Delta_{
S
^{n-1}}$ \cite[Sections 22.3, 22.4]{shubin1987pseudodifferential}. 

\begin{proposition} \label{multiplicity of eigenvalue}
		The set of all eigenvalues of $\Delta_{
  S
  ^{n-1}}$ is $\{l\,(l+n-2) : l \in \mathbb{N} \cup \{0\} \}$. The eigenfunctions corresponding to each eigenvalue $l(l+n-2)$ are the spherical harmonics of degree $l$ and thus, the multiplicity of the eigenvalue $l(l+n-2)$ equals the dimension of $\mathcal{H}_{l}$, the space of harmonic homogeneous polynomials of degree $l$ on $\mathbb{R}^{n}$.
\end{proposition}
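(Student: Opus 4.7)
The plan is to reduce the spectral problem for $\Delta_{S^{n-1}}$ to a question about harmonic polynomials on $\mathbb{R}^{n}$, via the polar-coordinate decomposition of the Euclidean Laplacian. First, I would record the identity
\[
\Delta_{\mathbb{R}^{n}} = \frac{\partial^{2}}{\partial r^{2}} + \frac{n-1}{r}\frac{\partial}{\partial r} + \frac{1}{r^{2}}\Delta_{S^{n-1}}
\]
on $\mathbb{R}^{n} \setminus \{0\}$, in coordinates $(r, \omega)$ with $r = |x|$ and $\omega = x/|x| \in S^{n-1}$. This formula is the key technical bridge between the two operators.

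Given $P \in \mathcal{H}_{l}$, write $P(x) = r^{l}\, Y(\omega)$ where $Y := P|_{S^{n-1}}$. Plugging this into the displayed identity and using $\Delta_{\mathbb{R}^{n}} P = 0$ yields, after cancelling the common factor $r^{l-2}$,
\[
\Delta_{S^{n-1}} Y = -l(l+n-2)\, Y,
\]
so $l(l+n-2)$ is an eigenvalue (up to the sign convention adopted for the spherical Laplacian). Moreover, the restriction map $\mathcal{H}_{l} \to C^{\infty}(S^{n-1})$ is injective: a homogeneous polynomial vanishing on $S^{n-1}$ must vanish on every ray through the origin, hence be identically zero. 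Thus the multiplicity of $l(l+n-2)$ is at least $\dim \mathcal{H}_{l}$.

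The main obstacle is the converse: that the spectrum is exhausted by $\{l(l+n-2) : l \geq 0\}$ and that each eigenspace coincides with $\mathcal{H}_{l}|_{S^{n-1}}$. For this, I would invoke the Fischer decomposition, which asserts that every homogeneous polynomial of degree $m$ on $\mathbb{R}^{n}$ can be written as
\[
P = H_{m} + |x|^{2} H_{m-2} + |x|^{4} H_{m-4} + \cdots, \qquad H_{m-2j} \in \mathcal{H}_{m-2j}.
\]
This is proved by induction on $m$, using that the Laplacian sends homogeneous polynomials of degree $m$ surjectively onto those of degree $m-2$. Restricting to $S^{n-1}$ (where $|x|^{2} = 1$), the subspace $\bigoplus_{l \geq 0} \mathcal{H}_{l}|_{S^{n-1}}$ therefore contains every polynomial restriction. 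By Stone--Weierstrass, polynomial restrictions are dense in $C(S^{n-1})$, and hence in $L^{2}(S^{n-1})$. Since $\Delta_{S^{n-1}}$ is a self-adjoint elliptic operator on a closed manifold, the spectral theorem produces an orthonormal basis of smooth eigenfunctions for $L^{2}(S^{n-1})$; any eigenfunction with eigenvalue different from every $-l(l+n-2)$ would be $L^{2}$-orthogonal to $\bigoplus_{l} \mathcal{H}_{l}|_{S^{n-1}}$, hence orthogonal to a dense subspace, hence zero. This pins down the spectrum and identifies each eigenspace with $\mathcal{H}_{l}|_{S^{n-1}}$, completing the proposition.
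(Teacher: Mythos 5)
Your argument is correct. The paper does not prove this proposition at all --- it is quoted as a known fact with a citation to Shubin's book, and the proof given there is essentially the one you outline: harmonic homogeneous polynomials restrict to eigenfunctions via the polar decomposition of $\Delta_{\mathbb{R}^{n}}$, the restriction map is injective, and completeness follows from the Fischer decomposition together with Stone--Weierstrass and the orthogonality of eigenspaces of a self-adjoint operator. The only step worth making explicit in the exhaustion argument is that the values $l(l+n-2)$ are pairwise distinct for $l\geq 0$ (immediate, since $l\mapsto l(l+n-2)$ is strictly increasing for $n\geq 2$), so that an eigenfunction for a putative new eigenvalue, or an excess eigenfunction for $l(l+n-2)$ orthogonal to $\mathcal{H}_{l}|_{S^{n-1}}$, is indeed orthogonal to every $\mathcal{H}_{l'}|_{S^{n-1}}$ and hence to a dense subspace of $L^{2}(S^{n-1})$.
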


Let $\Omega_0 := B_{R_2} \backslash \overline{B_{R_1}}$, where $B_{R_1}$ and $B_{R_2}$ are concentric balls in $\mathbb{R}^{n}$ of radius $R_1$ and $R_2$ respectively. Without loss of generality we assume that both these balls are centered at the origin. Now, we find the eigenvalues and the eigenfunctions of the following Steklov-Dirichlet eigenvalue problem on $\Omega_0$ 
and study some of their properties. 
\begin{equation}
    \begin{cases}
        \Delta u=0 & \text{in} \,\, \Omega_0,\\
        u=0 & \text{on}\,\, \partial B_{R_1},\\
        \frac{\partial u}{\partial \nu }=\sigma u & \text{on} \,\, \partial B_{R_2.}  \label{eq:Steklov-Dirichlet(ball)}
    \end{cases}
\end{equation}

Let $u(r,\omega)=f(r)g(\omega)$ be a smooth function, where $f$ is a radial function defined on $[R_1, R_2]$ and $g$ is an eigenfunction of $\Delta_{
S
^{n-1}}$ corresponding to the eigenvalue $
l(l+n-2)$. 
Now,
\begin{align*}
     \Delta u(r, \omega) &= g(\omega)\left(-f''(r)- \frac{n-1}{r} f'(r) \right) + \frac{f(r)}{r^2} \Delta_{
     S
     ^{n-1}} g(\omega)\\
     &= g(\omega) \left(-f''(r)- \frac{n-1}{r} f'(r) + \frac{f(r)}{r^2} l(l+n-2) \right).
\end{align*}
If $u$ is a solution of (\ref{eq:Steklov-Dirichlet(ball)})  then the function $f$ satisfies 
\begin{eqnarray} \label{ODE for f}
    \begin{array}{ll}
        -f''(r)-\frac{n-1}{r}f'(r)+ \frac{l(l+n-2)}{r^2}f(r)=0 ~\mbox{ for } ~ r \in (R_1,R_2), \\
    f(R_1)=0, \,\, f'(R_2)=\sigma f(R_2).
    \end{array}
\end{eqnarray}
We know that the eigenfunctions of \eqref{ODE for f} are given by
\begin{equation*}
    f_{0}(r)= \begin{cases}
    \frac{1}{R_1^{n-2}} - \frac{1}{r^{n-2}} \,\,\, & \text{if} \,\, n > 2 \\
    ln(r) - ln(R_1) \,\,\, & \text{if} \,\, n=2,
\end{cases} 
\end{equation*}
and,
\begin{equation} \label{eqn: eigenfuction}
    f_{l}(r)= r^l - \frac{R_1^{n+2l-2}}{r^{n+l-2}},\,\,\, \text{for}\,\, l\geq 1
 \end{equation}
 corresponding to the eigenvalues
\begin{equation*}
    \sigma_{(0)}(\Omega_0)= \begin{cases}
    \frac{(n-2)R_{1}^{n-2}}{R_{2}^{n-1} - R_{2} R_{1}^{n-2}} \,\,\, & \text{for} \,\, n > 2 \\
    \frac{1}{R_{2} (ln(R_2) - ln(R_1))} \,\,\, & \text{for} \,\, n=2,
\end{cases} 
\end{equation*}
and,
\begin{align*}
    \sigma_{(l)}(\Omega_0)= \frac{l  R_2^{n+2l-2} + (n+l-2) R_1^{n+2l-2}}{ R_2^{n+2l-1} - R_2 R_1^{n+2l-2}}, \,\, \text{for}\,\, l\geq 1,
    \end{align*}
respectively. Here, $\sigma_{(i)}(\Omega_0)$, $i \in \mathbb{N} \cup \{0\}$ denotes the $i+1$-th Steklov Dirichlet eigenvalue on $\Omega_0$ counted without multiplicity.
\begin{remark}
   For $2 \leq i \leq n+1$, $\sigma_i(\Omega_0) = \sigma_2(\Omega_0) = \sigma_{(1)}(\Omega_0)$ i.e., $\sigma_{(1)}(\Omega_0)$ has multiplicity $n$ and the corresponding eigenfunctions are $f_{1}(r) \frac{x_i}{r}, i = 1, 2, \ldots n$. Further, note that 
   \begin{align*}
       \sigma_2(\Omega_0) = \sigma_{(1)}(\Omega_0) = \frac{R_2^{n} + (n-1) R_1^{n}}{ R_2^{n+1} - R_2 R_1^{n}} = \frac{\displaystyle \int_{\Omega_{0}} \left( (f'_1(r))^2 + \frac{(n-1)}{r^2} f_1^2(r)\right) \, dV }{\displaystyle \int_{\partial B_{R_2} }f_1^2(r) \, dS}. 
  \end{align*}
\end{remark}

\begin{lemma} \label{increasing lemma}
    Let $f_1:[R_1,\infty) \rightarrow \mathbb{R} $ be as defined in \eqref{eqn: eigenfuction}. Define $F,G : [R_1,\infty) \rightarrow \mathbb{R}$ as $ F(r) := \left((f'_1(r))^2 + \frac{(n-1)}{r^2} f_1^2(r)\right)$, and $ G(r) := \left( 2f_1(r)f_1'(r) + \frac{n-1}{r}f_1^2(r) \right)$. Then
       $F$ is a decreasing function of $r$ and
       $G$ is an increasing function of $r$.
\end{lemma}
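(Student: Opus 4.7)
The plan is to prove both statements by direct differentiation, using the ODE \eqref{ODE for f} (with $l=1$, so $l(l+n-2)=n-1$) that $f_1$ satisfies to eliminate the second-derivative term. Concretely, from \eqref{ODE for f} we have
\begin{equation*}
f_1''(r) \;=\; -\frac{n-1}{r}\,f_1'(r) \;+\; \frac{n-1}{r^{2}}\,f_1(r)
\end{equation*}
for all $r > R_1$, and this identity will collapse the expressions for $F'$ and $G'$ into manifestly signed quantities. No delicate estimates on $R_1$ or $r$ are needed; everything follows from this one substitution.

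For the monotonicity of $F$, I would differentiate
\begin{equation*}
F'(r) \;=\; 2f_1'(r)f_1''(r) \;+\; \frac{2(n-1)}{r^{2}}\,f_1(r)f_1'(r) \;-\; \frac{2(n-1)}{r^{3}}\,f_1^{2}(r),
\end{equation*}
substitute the ODE expression for $f_1''$, and group terms. The two pieces containing $f_1 f_1'/r^{2}$ combine, and after factoring out $-2(n-1)/r$ the bracket becomes the perfect square $(f_1' - f_1/r)^{2}$. Thus
\begin{equation*}
F'(r) \;=\; -\,\frac{2(n-1)}{r}\Bigl(f_1'(r) - \tfrac{f_1(r)}{r}\Bigr)^{2} \;\le\; 0,
\end{equation*}
which gives that $F$ is decreasing (strictly, since $f_1' - f_1/r$ does not vanish identically).

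For $G$, the same strategy works even more cleanly. Differentiating gives
\begin{equation*}
G'(r) \;=\; 2(f_1'(r))^{2} \;+\; 2f_1(r)f_1''(r) \;+\; \frac{2(n-1)}{r}\,f_1(r)f_1'(r) \;-\; \frac{n-1}{r^{2}}\,f_1^{2}(r).
\end{equation*}
Substituting the ODE for $f_1''$ cancels the $f_1 f_1'/r$ cross-terms and leaves
\begin{equation*}
G'(r) \;=\; 2(f_1'(r))^{2} \;+\; \frac{n-1}{r^{2}}\,f_1^{2}(r) \;>\; 0,
\end{equation*}
which is obviously positive for $r > R_1$, so $G$ is (strictly) increasing.

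Since the ODE does the structural work, there is essentially no analytic obstacle; the only thing to watch is the bookkeeping when combining the three terms in $F'$ so that the perfect square emerges with the correct sign, and verifying that the cross terms in $G'$ cancel exactly. Both reduce to routine algebra once $f_1''$ is eliminated, and this is why it is advantageous to treat $f_1$ abstractly via its ODE rather than plugging in the explicit formula $f_1(r) = r - R_1^{n}/r^{n-1}$.
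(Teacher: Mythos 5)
Your proof is correct, but it takes a genuinely different route from the paper's. The paper substitutes the explicit formula $f_1(r)=r-R_1^n/r^{n-1}$ into $F$ and $G$, simplifies to $F(r)=n\bigl(1+(n-1)(R_1/r)^{2n}\bigr)$ and $G(r)=(n+1)r-2R_1^n/r^{n-1}-(n-1)R_1^{2n}/r^{2n-1}$, and reads off the signs of $F'$ and $G'$ by inspection. You instead differentiate the abstract expressions and use the ODE \eqref{ODE for f} (with $l=1$) to eliminate $f_1''$, obtaining $F'(r)=-\frac{2(n-1)}{r}\bigl(f_1'(r)-\frac{f_1(r)}{r}\bigr)^2\le 0$ and $G'(r)=2(f_1'(r))^2+\frac{n-1}{r^2}f_1^2(r)>0$; I have checked that both identities are algebraically correct and consistent with the paper's explicit formulas (with $f_1'-f_1/r=nR_1^n r^{-n}$, your $F'$ equals $-2n^2(n-1)R_1^{2n}/r^{2n+1}$, which agrees with a direct differentiation of the paper's $F$ -- the paper's displayed $F'$ is in fact missing a factor of $n$, though this does not affect the sign). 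Your argument is structural rather than computational: it shows that the monotonicity of $F$ and $G$ is a consequence of the radial harmonic equation alone, not of the particular closed form of $f_1$, and it would adapt to the space-form setting (where $\sin_{\tilde{M}}$ replaces $r$ in the ODE) that the authors flag as future work. The paper's computation is shorter and self-contained but yields no such insight. Two very minor points to tighten: strict positivity of $G'$ at a point where $f_1$ might vanish (e.g.\ $r=R_1$) follows because $f_1'$ and $f_1$ cannot vanish simultaneously for a nonzero ODE solution; and the lemma only asserts monotone, not strictly monotone, so $F'\le 0$ already suffices.
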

    \begin{proof}
        We have $f_1(r)= r-\frac{R_1^n}{r^{n-1}}$. Therefore, $f_1'(r)= 1 + (n-1)\left(\frac{R_1}{r}\right)^n.$ Substituting these in the definition of $F$ and $G$ we get,
$$F(r) = 
n \left( 1+ (n-1) \left(\frac{R_1}{r}\right)^{2n}\right), ~~~
 G(r)= 
(n+1)r - 2\frac{R_1^n}{r^{n-1}} - (n-1)\frac{R_1^{2n}}{r^{2n-1}}.$$ Consequently, 
$$F'(r) = -2n(n-1)\frac{R_1^{2n}}{r^{2n+1}} < 0, ~~\mbox{ and }~~G'(r) = (n+1) + 2(n-1)\frac{R_1^n}{r^n}+(2n-1)(n-1)\frac{R_1^{2n}}{r^{2n}} > 0.$$ Hence proved.
           
            
    \end{proof}

\section{Integral inequalities related to Steklov-Dirichlet eigenfunctions on Annular domains} \label{sec:integral inequalities}
We first define Euclidean domains with symmetry of order $s$, which appears in the statement of our main result.

For $s\in \mathbb{N},$ and $i,j\in \left\{1,2,\ldots, n\right\}$, let $T_{i,j}^{\frac{2 \pi}{s}}$ denote the rotation, in the anti-clockwise direction, around the origin by an angle $\frac{2 \pi}{s}$ in the coordinate plane $(x_{i}, x_{j})$. 
\begin{definition}
A domain $\Omega \subset \mathbb{R}^{n}$ is said to be \textit{symmetric of order $s$} with respect to the origin, if there exists a rotation $\rho$ of $\mathbb{R}^n$ such that  $T_{i,j}^{\frac{2 \pi}{s}}(\rho(\Omega)) = \rho(\Omega), \text{ for all } \, i,j \in \left\{1,2,\ldots, n\right\}.$ 
\end{definition}
\begin{definition}
A domain $\Omega \subset \mathbb{R}^{n}$ is said to be \textit{centrally symmetric} with respect to the origin, if $-x \in \Omega$ whenever $x \in \Omega$.
\end{definition}

Let $E_1, E_2, \ldots, E_n$ be the standard orthonormal basis of $\mathbb{R}^{n}$ and $(x_1, x_2, \ldots, x_n)$ be the standard normal coordinate system on $\mathbb{R}^{n}$. Let $\tilde{\Omega}$  be a 
smooth bounded domain in $\mathbb{R}^n$. Let $B_{R_1}$ be a ball of radius $R_1$ such that $B_{R_1} \subset \tilde{\Omega}$. Without loss of generality, we can assume that, $\tilde{\Omega}$ contains the origin of $\mathbb{R}^n$ and that $B_{R_1}$ is centered at the origin. Let $B_{R_2}$ be a ball of radius $R_2$ centered at the origin such that Vol$(B_{R_2})$ = Vol$(\tilde \Omega)$. Clearly, $R_2 > R_1$. Define $\Omega:= \tilde{\Omega} \backslash \overline{B_{R_1}}$ and 
$\Omega_0:= B_{R_2} \backslash \overline{B_{R_1}}$.

The following proposition is useful in finding test functions for the variational charecterization of the Steklov-Dirichlet eigenvalues on $\Omega$.
\begin{proposition} \label{prop:integral expression}
 Let $g : (0, \infty) \rightarrow \mathbb{R}$ be a smooth function. Let $\Omega$  be a bounded smooth domain in $\mathbb{R}^n$.
 \begin{enumerate}
     \item If $\Omega$ 
     has a symmetry of order 2, then for each $i=1,2,\ldots n$, we have\\[2mm]
        (a) $\displaystyle \int_{x \in \Omega}   g(\|x\|) x_i\, dV = 0$,  \label{eqn:integral 1}~~~
       (b) $\displaystyle \int_{x \in \partial \Omega}   g(\|x\|) x_i\, dS = 0$. 
        \label{eqn:integral 3}

\item If $\Omega$ 
has symmetry of order 4, then for each $i, j=1,2,\dots n$, $i\neq j$,  we have\\[2mm]
       (a) $\displaystyle \int_{ x \in \Omega} g(\|x\|) x_i x_j \, dV = 0$, \label{eqn:integral 2}
        $\displaystyle \int_{x \in \partial \Omega}  g(\|x\|) x_i x_j \,dS = 0$.
        \label{eqn:integral 4}
 \end{enumerate}
\end{proposition}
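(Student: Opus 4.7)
The plan is to transfer the integrals from $\Omega$ to $\Omega' := \rho(\Omega)$ via the substitution $y = \rho(x)$, then apply symmetry-based sign arguments on $\Omega'$. Since $\rho$ is an isometry, $\|x\| = \|y\|$ and the volume/surface elements are preserved, while $x_i = (\rho^{-1}y)_i = \sum_k R_{ik}\, y_k$ with $R := \rho^{-1}$ orthogonal. Hence
\begin{align*}
\int_\Omega g(\|x\|)\, x_i \,dV &= \sum_k R_{ik} \int_{\Omega'} g(\|y\|)\, y_k \,dV,\\
\int_\Omega g(\|x\|)\, x_i x_j \,dV &= \sum_{k,l} R_{ik}\, R_{jl} \int_{\Omega'} g(\|y\|)\, y_k y_l \,dV,
\end{align*}
with the analogous identities on the boundary. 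The task therefore reduces to proving (i) $\int_{\Omega'} g(\|y\|)\, y_k \,dV = 0$ for each $k$ when $\Omega'$ is $T_{k,l}^\pi$-invariant for some $l \neq k$, and (ii) $\int_{\Omega'} g(\|y\|)\, y_k y_l \,dV = 0$ for $k \neq l$ together with $\int_{\Omega'} g(\|y\|)\, y_k^2 \,dV = \int_{\Omega'} g(\|y\|)\, y_l^2 \,dV$ when $\Omega'$ is $T_{k,l}^{\pi/2}$-invariant.

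For (i) I change variables $y \mapsto T_{k,l}^{\pi}(y)$: since $T_{k,l}^\pi$ is an isometry with $T_{k,l}^\pi(\Omega') = \Omega'$ that sends $y_k \mapsto -y_k$ while preserving $\|y\|$ and $dV$, the integral equals its own negative and hence vanishes; the same argument works on $\partial\Omega'$ since $T_{k,l}^\pi$ maps $\partial\Omega'$ to itself and preserves $dS$. This proves part~(1). For (ii) I use $T_{k,l}^{\pi/2}$, which sends $(y_k, y_l) \mapsto (-y_l, y_k)$: it maps $y_k y_l$ to $-y_k y_l$ (yielding off-diagonal vanishing) and $y_k^2$ to $y_l^2$ (yielding equality of the diagonal integrals, with common value $c$, say). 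Substituting back,
\begin{equation*}
\int_\Omega g(\|x\|)\, x_i x_j \,dV = c \sum_k R_{ik}\, R_{jk} = c\,(RR^\top)_{ij} = c\, \delta_{ij} = 0 \quad \text{for } i \neq j,
\end{equation*}
and the same computation handles the surface integral.

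The subtle point is the auxiliary rotation $\rho$ in the definition of symmetry: the coordinate $x_i$ in the integrand is not aligned with the symmetry axes of $\Omega$, so a naive sign reversal performed directly on $\Omega$ is unavailable. For the linear integrals this is painless because each $\int_{\Omega'} g(\|y\|)\, y_k\,dV$ vanishes individually. For the quadratic integrals, however, the cross terms $R_{ik} R_{jl}$ survive, and what is needed is precisely the combination of off-diagonal cancellation \emph{and} equal-diagonals from the order-4 hypothesis, so that the orthogonality $RR^\top = I$ collapses the sum to $c\,\delta_{ij}$. I expect this orthogonality argument to be the main (though still light) non-routine step; the rest is isometric change of variables.
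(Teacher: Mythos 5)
Your proof is correct, and its core mechanism is the same as the paper's: a change of variables by the order-$2$ (resp.\ order-$4$) rotation flips the sign of $x_i$ (resp.\ $x_ix_j$) while preserving $g(\|x\|)$, $dV$ and $dS$, forcing the integral to vanish. The difference is that the paper applies $T_{i,j}^{2\pi/s}$ directly to $\Omega$, tacitly taking the auxiliary rotation $\rho$ in the definition of ``symmetry of order $s$'' to be the identity, whereas you transfer the integral to $\Omega'=\rho(\Omega)$ first and then undo the rotation at the end. For part (1) this extra care changes nothing of substance, since each first moment vanishes individually on $\Omega'$ and the conclusion is manifestly rotation-covariant. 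For part (2) it is a genuine improvement: the vanishing of off-diagonal second moments is \emph{not} preserved under an arbitrary orthogonal change of frame, and your observation that the order-$4$ symmetry also forces the diagonal moments to coincide --- so that the full moment matrix is $c\,\delta_{kl}$ and hence invariant under conjugation by $R=\rho^{-1}$ --- is exactly what is needed to justify the statement in the stated generality. (The equal-diagonal fact you derive is proved separately in the paper as Lemma \ref{constant lemma}, again with $\rho$ suppressed.) So your write-up closes a small gap that the paper's proof leaves implicit; in the paper's subsequent applications $\rho$ can be normalized away, which is presumably why the authors did not bother.
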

\begin{proof}
 \begin{enumerate} 
        \item $\Omega$ has symmetry of order $2$. Therefore, if we take the transformation $x=R_{i,j}^\frac{2\pi}{2}(y) = -y$ for $j \neq i$, then we get
\begin{enumerate}
    \item $\displaystyle \int_{x \in \Omega}   g(\|x\|) x_i\, dV = -\displaystyle \int_{y \in \Omega}   g(\|y\|) y_i\, dV $. Thus, $\displaystyle \int_{x \in \Omega}   g(\|x\|) x_i\, dV = 0,$ for $i=1,2,\ldots n$. 
    \item  Since $\partial \Omega$ is also symmetric of order $2$, proof follows as in \eqref{eqn:integral 1} (a).
\end{enumerate}
\item Here, $\Omega$ has symmetry of order $4$. So, if we take the transformation $x=R_{i,j}^\frac{2\pi}{4}(y)$ we get \\
\begin{enumerate}
    \item  $\displaystyle \int_{x \in \Omega} g(\|x\|) x_i x_j \, dV = -\displaystyle \int_{y\in \Omega} g(\|y\|) y_i y_j \, dV$. Thus, $\displaystyle \int_\Omega g(\|x\|) x_i x_j \, dV = 0$.
    \item The proof is similar to that of \eqref{eqn:integral 2} (a).  
\end{enumerate}
 \end{enumerate}   
\end{proof}
Observe that for the function $f_1$ defined in \eqref{eqn: eigenfuction},
\begin{align*} 
\Bigg \langle  \nabla \left( \frac{f_1(\|x\|)}{\|x\|}x_i \right), E_j \Bigg \rangle = \frac{\partial}{\partial x_j} \left( \frac{f_1(\|x\|)}{\|x\|}x_i \right) = 
\begin{cases}
   \frac{f_1'(\|x\|)}{\|x\|^2} x_j \ x_i- \frac{f_1(\|x\|)}{\|x\|^3} x_j \ x_i, &  \text{ for } j \neq i, \\[2mm]
    \frac{f_1'(\|x\|)}{\|x\|^2} x_i^2+ \frac{f_1(\|x\|)}{\|x\|^3}(\|x\|^2-x_i^2), &  \text{ for } j = i.
\end{cases}
\end{align*}
Using this, we get 
\begin{align} \label{eq: inner product of gradient1}
\Bigg \langle  \nabla \left( \frac{f_1(\|x\|)}{\|x\|}x_i \right), \nabla \left( \frac{f_1(\|x\|)}{\|x\|}x_j \right) \Bigg \rangle =  
\begin{cases}
   \left( \frac{(f_1'(\|x\|))^2}{\|x\|^2} - \frac{(f_1(\|x\|))^2}{\|x\|^4}\right)x_ix_j, &  \text{ for } j \neq i, \\[2mm]
    \left( \frac{(f_1'(\|x\|))^2}{\|x\|^2}x_i^2 - \frac{f_1^2(\|x\|)}{\|x\|^4}x_i^2  + \frac{f_1^2(\|x\|)}{\|x\|^2}\right), &  \text{ for } j = i.\\
\end{cases}
\end{align}
\begin{align} \label{eq: inner product of gradient2}
  \Bigg \langle \nabla\left(f_1(\|x\|)\right),\nabla \left(\frac{f_1(\|x\|)}{\|x\|}x_j\right)  \Bigg \rangle = \left(\frac{\left(f_1'(\|x\|)\right)^2}{\|x\|} - \frac{f_1'(\|x\|) f_1(\|x\|)}{\|x\|^2}\right)x_i.  
\end{align}
From equation \eqref{eq: inner product of gradient1}, \eqref{eq: inner product of gradient2} and Proposition \ref{prop:integral expression} we conclude that
\begin{cor} \label{cor:integral expression}
    If $\Omega 
    $ is a 
    bounded smooth domain in $\mathbb{R}^n$ having symmetry of order 4 and if $f_1$ is as defined in \eqref{eqn: eigenfuction}. Then, for each $i,j=1,2,\dots n$, $i\neq j$ we have \\
    \begin{enumerate} 
        \item  $\displaystyle \int_{x \in \partial \Omega}  f_1(\|x\|) \frac{f_1(\|x\|)}{\|x\|}x_i\, dS = 0,$ \label{proof (i)}\\
         \item $\displaystyle \int_{x \in \partial \Omega}  \frac{f_1(\|x\|)}{\|x\|}x_i. \frac{f_1(\|x\|)}{\|x\|}x_j \,dS=0,$
        \item $\displaystyle \int_{ x \in \Omega} \left< \nabla f_1(\|x\|), \nabla \left(\frac{f_1(\|x\|)}{\|x\|}x_i \right)\right> dV=0,$ 
        \item $\displaystyle \int_{x \in \Omega} \left< \nabla \left(\frac{f_1(\|x\|)}{\|x\|} x_i \right) , \nabla \left(\frac{f_1(\|x\|)}{\|x\|}x_j \right)\right> dV=0$. 
    \end{enumerate}
\end{cor}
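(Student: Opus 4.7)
The plan is to reduce each of the four integrands to the form $g(\|x\|)\,x_i$ or $g(\|x\|)\,x_i x_j$ for a radial function $g$, and then invoke Proposition \ref{prop:integral expression}. Since symmetry of order $4$ implies symmetry of order $2$ (apply the rotation $T_{i,j}^{2\pi/4}$ twice to realise $T_{i,j}^{\pi}$, which acts as $(y_i,y_j)\mapsto(-y_i,-y_j)$ and hence, after composing with appropriate rotations, as $y\mapsto -y$), both clauses of that Proposition are available for such an $\Omega$; so each identity should follow from a direct application, once the integrand has been unpacked.

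For the two boundary identities this is essentially rewriting. In (1) the integrand equals $f_1(\|x\|)\cdot\frac{f_1(\|x\|)}{\|x\|}x_i = \frac{f_1^2(\|x\|)}{\|x\|}\,x_i$, which is of the form $g(\|x\|)\,x_i$ with $g(r)=f_1(r)^2/r$; Proposition \ref{prop:integral expression}(1)(b) applied to $\partial\Omega$ then gives vanishing. In (2) the integrand $\frac{f_1^2(\|x\|)}{\|x\|^2}\,x_i x_j$ with $i\neq j$ is of the form $g(\|x\|)\,x_i x_j$, and Proposition \ref{prop:integral expression}(2)(b) applies directly.

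For the two volume identities I would first substitute the gradient pairings computed in \eqref{eq: inner product of gradient1} and \eqref{eq: inner product of gradient2}. In (3), identity \eqref{eq: inner product of gradient2} rewrites the integrand as the radial function $\left(\frac{(f_1'(\|x\|))^2}{\|x\|} - \frac{f_1'(\|x\|) f_1(\|x\|)}{\|x\|^2}\right)$ multiplied by a single coordinate, so the integral over $\Omega$ vanishes by Proposition \ref{prop:integral expression}(1)(a). In (4), since we are in the off-diagonal case $i\neq j$, the first branch of \eqref{eq: inner product of gradient1} applies and the integrand becomes a radial function times $x_i x_j$; Proposition \ref{prop:integral expression}(2)(a) then delivers the conclusion.

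The main obstacle, if there is one at all, is merely bookkeeping: being consistent about which clause of the Proposition applies to which identity, and checking once that order-$4$ symmetry indeed implies the order-$2$ symmetry required for clauses (1)(a) and (1)(b). Beyond that the proof is essentially a tabulation, since all the necessary algebraic expansions of the gradients of $\frac{f_1(\|x\|)}{\|x\|}x_i$ have already been carried out in the paragraph immediately preceding the corollary.
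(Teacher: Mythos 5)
Your proposal is correct and follows exactly the route the paper takes: the corollary is stated there as an immediate consequence of the gradient computations \eqref{eq: inner product of gradient1}--\eqref{eq: inner product of gradient2} together with Proposition \ref{prop:integral expression}, which is precisely your reduction of each integrand to $g(\|x\|)x_i$ or $g(\|x\|)x_ix_j$. Your extra remark that order-$4$ symmetry yields the order-$2$ symmetry needed for the single-coordinate cases is a detail the paper leaves implicit, and it is correct.
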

\begin{lemma}  \label{constant lemma}
     Let $\Omega$ be a 
     bounded smooth domain in $\mathbb{R}^n$ having symmetry of order $4$. Let $g: \mathbb{R}^n \rightarrow \mathbb{R}$ be a positive radial function. 
     Then, there a exist constant $A > 0$ such that 
     \begin{equation}
         \int_{x \in \Omega} g(\|x\|) x_i^2\,\, dV= A\,\, \text{for all}\, \,i\in \{1,2 \dots n\}.
     \end{equation}
\end{lemma}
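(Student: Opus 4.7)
The plan is to argue by a change of variables using the order-$4$ rotational symmetry, exactly in the spirit of Proposition~\ref{prop:integral expression}. Fix distinct indices $i,j\in\{1,2,\ldots,n\}$. It suffices to show that
\[
\int_{\Omega} g(\|x\|)\, x_i^2 \, dV \;=\; \int_{\Omega} g(\|x\|)\, x_j^2 \, dV,
\]
since once consecutive pairs of indices give equal integrals, all $n$ integrals coincide, and the common value can be called $A$.

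To achieve this equality, I would invoke the rotation $T_{i,j}^{\pi/2}$ in the $(x_i,x_j)$-plane, which (after applying the rotation $\rho$ appearing in the definition, exactly as was done in the proof of Proposition~\ref{prop:integral expression}) leaves $\Omega$ invariant. Under the change of variable $x = T_{i,j}^{\pi/2}(y)$, one has $x_i = -y_j$, $x_j = y_i$, $x_k = y_k$ for $k\notin\{i,j\}$, and $\|x\|=\|y\|$; the Jacobian determinant has absolute value $1$. Substituting yields
\[
\int_{\Omega} g(\|x\|)\, x_i^2 \, dV \;=\; \int_{T_{i,j}^{-\pi/2}(\Omega)} g(\|y\|)\, y_j^2 \, dV \;=\; \int_{\Omega} g(\|y\|)\, y_j^2 \, dV,
\]
where the last equality uses $T_{i,j}^{-\pi/2}(\Omega)=\Omega$, a consequence of the order-$4$ symmetry (applied twice or in reverse, noting that the rotation group generated by $T_{i,j}^{\pi/2}$ contains its inverse).

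Finally, positivity of $A$ is immediate: the integrand $g(\|x\|)\,x_i^2$ is nonnegative on $\Omega$, and strictly positive on the open subset $\Omega \setminus \{x_i=0\}$, which has positive Lebesgue measure since $\Omega$ is an open bounded domain not contained in the hyperplane $\{x_i=0\}$. Hence $A>0$.

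The argument involves no genuine obstacle; the only subtlety is bookkeeping with the auxiliary rotation $\rho$ in the definition of order-$s$ symmetry. I would handle this by first replacing $\Omega$ with $\rho(\Omega)$ (which is invariant under each $T_{i,j}^{\pi/2}$ by definition), noting that since $\rho$ is a rotation both $\|x\|$ and the integrals $\int x_i^2$ transform consistently so that the statement for $\rho(\Omega)$ is equivalent to that for $\Omega$.
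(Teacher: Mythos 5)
Your proposal is correct and follows essentially the same route as the paper: the paper also performs the change of variables $x=T_{1,i}^{2\pi/4}(y)$ (there written $R_{1,i}^{2\pi/4}$) to identify $\int_\Omega g(\|x\|)x_i^2\,dV$ with $\int_\Omega g(\|y\|)y_1^2\,dV$ and then notes $A>0$. Your additional remarks on the auxiliary rotation $\rho$ and on why $A$ is strictly positive are careful touches the paper leaves implicit, but they do not change the argument.
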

     \begin{proof}
         Fixing an $i\neq 1$, and taking the transformation $x=R_{1,i}^\frac{2\pi}{4}(y)$, we obtain 
         \begin{equation*}
             \int_{x \in \Omega} g(\|x\|) x_i^2\,\, dV= \int_{y \in \Omega} g(\|y\|) y_1^2\,\, dV =:A.
         \end{equation*}
         Clearly, $A>0$. Hence, proved.
 \end{proof}
Recall that $\Omega:= \tilde{\Omega} \backslash \overline{B_{R_1}}$ and $\Omega_0:= B_{R_2} \backslash \overline{B_{R_1}}$.
 \begin{lemma} \label{lem: integral on domain}
    Let  $F:[R_1, \infty)\rightarrow \mathbb{R}$ be as defined in Lemma (\ref{increasing lemma}). 
    Then, the following inequality holds:
    \begin{equation}
       \int_\Omega F(\|x\|) dV \leq \int_{\Omega_0} F(\|x\|) dV.
    \end{equation} \label{lower Bound}
    \end{lemma}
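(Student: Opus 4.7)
The plan is to reduce the inequality to a two-set comparison using the volume constraint $\mathrm{Vol}(\tilde{\Omega}) = \mathrm{Vol}(B_{R_2})$, together with the monotonicity of $F$ established in Lemma \ref{increasing lemma}. Since $\overline{B_{R_1}} \subset \tilde{\Omega}$ and $\overline{B_{R_1}} \subset B_{R_2}$ (because $R_2 > R_1$), the inner ball cancels out cleanly when one forms the symmetric difference of $\Omega$ and $\Omega_0$. Specifically,
\begin{equation*}
\Omega \setminus \Omega_0 = \tilde{\Omega} \setminus B_{R_2}, \qquad \Omega_0 \setminus \Omega = B_{R_2} \setminus \tilde{\Omega},
\end{equation*}
and the equal-volume hypothesis yields
$\mathrm{Vol}(\tilde{\Omega} \setminus B_{R_2}) = \mathrm{Vol}(B_{R_2} \setminus \tilde{\Omega})$.

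Next I would split each integral into an integral over $\Omega \cap \Omega_0$ plus an integral over the corresponding symmetric-difference piece, so that
\begin{equation*}
\int_{\Omega_0} F(\|x\|)\, dV - \int_{\Omega} F(\|x\|)\, dV = \int_{B_{R_2} \setminus \tilde{\Omega}} F(\|x\|)\, dV - \int_{\tilde{\Omega} \setminus B_{R_2}} F(\|x\|)\, dV.
\end{equation*}
The key observation is that every point of $B_{R_2} \setminus \tilde{\Omega}$ satisfies $\|x\| \leq R_2$, while every point of $\tilde{\Omega} \setminus B_{R_2}$ satisfies $\|x\| \geq R_2$. Since $F$ is decreasing (Lemma \ref{increasing lemma}), this gives $F(\|x\|) \geq F(R_2)$ on the first set and $F(\|x\|) \leq F(R_2)$ on the second.

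Combining these pointwise bounds with the equal-measure identity of the symmetric-difference pieces,
\begin{equation*}
\int_{B_{R_2} \setminus \tilde{\Omega}} F(\|x\|)\, dV \geq F(R_2)\,\mathrm{Vol}(B_{R_2}\setminus \tilde\Omega) = F(R_2)\,\mathrm{Vol}(\tilde\Omega \setminus B_{R_2}) \geq \int_{\tilde{\Omega} \setminus B_{R_2}} F(\|x\|)\, dV,
\end{equation*}
so the right-hand side of the difference expression is nonnegative, yielding the desired inequality. There is no real obstacle here: once the two symmetric-difference pieces are identified and the volumes matched, the monotonicity of $F$ and the fact that $R_2$ is the threshold radius do all the work. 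This is essentially the standard Hardy–Littlewood type rearrangement argument specialised to a decreasing radial weight under a fixed-volume constraint, and it requires no cancellation finer than what the volume equality provides.
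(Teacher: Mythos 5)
Your proof is correct and follows essentially the same route as the paper: split into the common part and the two difference sets, compare $F$ to the threshold value $F(R_2)$ using the monotonicity from Lemma \ref{increasing lemma}, and invoke the equality of volumes of the two difference pieces. The only cosmetic difference is that you identify $\Omega\setminus\Omega_0$ and $\Omega_0\setminus\Omega$ explicitly as $\tilde{\Omega}\setminus B_{R_2}$ and $B_{R_2}\setminus\tilde{\Omega}$, while the paper keeps them written as $\Omega\setminus(\Omega\cap\Omega_0)$ and $\Omega_0\setminus(\Omega\cap\Omega_0)$.
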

    \begin{proof} The domain $\Omega$ is a  disjoint union of $(\Omega \cap \Omega_0)$ and $\Omega \backslash (\Omega \cap \Omega_0)$. Therefore,
        \begin{align*}
            \int_\Omega F(\|x\|) dV &  = \int_{\Omega \cap \Omega_0} F(\|x\|) dV + \int_{\Omega \backslash (\Omega \cap \Omega_0)} F(\|x\|) dV \\
             & =   \int_{\Omega_0} F(\|x\|) dV  - \int_{\Omega_0 \backslash (\Omega \cap \Omega_0)} F(\|x\|) dV + \int_{\Omega \backslash (\Omega \cap \Omega_0)} F(\|x\|) dV. 
        \end{align*}
        By lemma (\ref{increasing lemma}) we know that $F$ is a decreasing function of $r$. Then
        \begin{align}
            F(\|x\|) > F(R_2)\,\,\,\, \,\,\, \text{for} \,\,\, x \in \Omega_0 \backslash (\Omega \cap \Omega_0), \label{inequality 1} \\
            \text{and} \,\,\, F(\|x\|) < F(R_2)\,\,\,\, \,\,\, \text{for} \,\,\, x \in \Omega \backslash (\Omega \cap \Omega_0). \label{inequality 2}
        \end{align} 
        Therefore,
        \begin{align*}
            \int_{\Omega} F(\|x\|) dV \leq \int_{\Omega_0} F(\|x\|) dV  - \int_{\Omega_0 \backslash (\Omega \cap \Omega_0)} F(R_2) dV + \int_{\Omega \backslash (\Omega \cap \Omega_0)} F(R_2) dV. 
        \end{align*}
         Now, since Vol$(\Omega_0 \backslash (\Omega \cap \Omega_0))$ = Vol$(\Omega \backslash (\Omega \cap \Omega_0))$, we get the desired inequality. 
    \end{proof}
Let $\partial \tilde{\Omega}$ and $\partial{B_{R_{2}}}$ denote the boundaries of $\tilde{\Omega}$ and ${B_{R_{2}}},$ respectively. 

\begin{lemma} \label{lem:integral on boundary}
    For $f_1$, a function of $\|x\|$ as defined in  \eqref{eqn: eigenfuction}, the following inequality holds:
    \begin{align}
        \int_{x \in \partial \tilde{\Omega}} f_1^2(\|x\|) \ dS \geq \int_{x \in \partial B_{R_2}} f_1^2(\|x\|) \ dS.
    \end{align} \label{upper  bound lemma}
    \begin{proof}
    Recall that $
    S
    ^{n-1}$ is the $(n-1)$-dimensional unit sphere in $\mathbb{R}^n$ and that $R_u = \sup \{r\,|\,\, ru \in \partial \tilde{\Omega},
    u \in S^{n-1}\} $.
        Let $C= \{ R_u u \,| \,\, u\in 
        S
        ^{n-1}\}$. Then, 
        \begin{align*}
             \int_{x \in \partial \tilde{\Omega}} f_1^2(\|x\|) dS & \geq \int_{x \in C} f_1^2(\|x\|) dS \\
             &= \int_{u \in 
             S
             ^{n-1}} f_1^2(R_u) sec(\theta) R_u^{n-1} du  \\
             & \geq \int_{ u\in 
             S
             ^{n-1}} f_1^2(R_u) R_u^{n-1} du \\
             & = \int_{u \in 
             S
             ^{n-1}}\int_{r \in [R_1,R_u]} \left ( 2 f_1(r) f'_1(r) r^{n-1} + f_1^2(r) (n-1) r^{n-2} \right) dr du\\
             & = \int_{
             S
             ^{n-1}}\int_{R_1}^{R_u} \left( 2 f_1(r) f'_1(r)  + f_1^2(r) \frac{(n-1)}{r}  \right)r^{n-1} dr du\\
             & \geq \int_{x\in \Omega} \left( 2 f_1(\|x\|) f'_1(\|x\|)  + f_1^2(\|x\|) \frac{(n-1)}{\|x\|}  \right) dV \\
             &= \int_{x\in \Omega} G(\|x\|)dV.
        \end{align*}
        where $G(r) := \left( 2 f_1(r) f'_1(r)  + f_1^2(r) \frac{(n-1)}{r}  \right) $ is defined as in lemma (\ref{increasing lemma}). 
        Thus, we have \begin{align}
            \int_{x \in \partial \tilde{\Omega}} f_1^2(\|x\|) dS \geq \int_{x \in\Omega} G(\|x\|) dV. \label{boundary domain inequality}
        \end{align}
        Now, by lemma (\ref{increasing lemma}), $G$ is an increasing function of $r$. So,
        \begin{equation} \label{inequality 3}
            G(\|x\|) < G(R_2) \,\,\, \text{for}\,\, x \in \Omega_0 \backslash (\Omega \cap \Omega_0), 
           ~\mbox{ and } ~~ G(\|x\|) > G(R_2)\,\, \text{for} \,\,\, x \in \Omega \backslash (\Omega \cap \Omega_0).
       \end{equation}
        Now 
         \begin{align*}
            \int_ {x \in \Omega} G(\|x\|) dV & = \int_{x \in \Omega \cap \Omega_0} G(\|x\|) dV+ \int_{x \in \Omega \backslash (\Omega \cap \Omega_0)} G(\|x\|) dV\\
            & =  \int_{x \in \Omega_0} G(\|x\|) dV  - \int_{x \in \Omega_0 \backslash (\Omega \cap \Omega_0)} G(\|x\|) dV + \int_{x \in \Omega \backslash (\Omega \cap \Omega_0)} G(\|x\|) dV.
        \end{align*} 
        Thus, from inequality (\ref{inequality 3}), we get 
        \begin{align*}
            \int_{x \in \Omega} G(\|x\|) dV \geq \int_{x \in \Omega_0} G(\|x\|) dV  - \int_{x \in \Omega_0 \backslash (\Omega \cap \Omega_0)} G(R_2) dV + \int_{x \in \Omega \backslash (\Omega \cap \Omega_0)} G(R_2) dV.
        \end{align*}
         Since Vol$(\Omega_0 \backslash (\Omega \cap \Omega_0))$ = Vol$(\Omega \backslash (\Omega \cap \Omega_0))$, we get
         \begin{align*}
             \int_{x \in \Omega} G(\|x\|) dV & \geq  \int_{x \in \Omega_0} G(\|x\|) dV \\
              &= \int_{x \in \Omega_0}  \left( 2 f_1(\|x\|) f'_1(\|x\|)  + f_1^2(\|x\|) \frac{(n-1)}{\|x\|}  \right) dV \\
             & = \int_{u \in S^{n-1}} \int_{r=R_1}^{R_2} \left( 2 f_1(r) f'_1(r)  + f_1^2(r) \frac{(n-1)}{r}  \right) r^{n-1} dr du\\
             & = \int_{S^{n-1}} f_1^2(R_2) R_2^{n-1} du \\ 
             & = \int_{x \in \partial B_{R_2}} f_1^2(\|x\|) dS.
         \end{align*}
         Thus,
             $\displaystyle \int_{x \in\partial \tilde{\Omega}} f_1^2(\|x\|) dS \geq \int_{x \in \partial B_{R_2}} f_1^2(\|x\|) dS$.   \end{proof}
\end{lemma}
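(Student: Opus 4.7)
The plan is to bound the boundary integral $\int_{\partial \tilde{\Omega}} f_1^2(\|x\|)\, dS$ from below by the analogous ball integral in three stages: a reduction to a spherical integral via the outer radial envelope of $\tilde{\Omega}$, conversion to a volume integral on $\Omega$ using the boundary condition $f_1(R_1) = 0$, and a volume-preserving rearrangement exploiting the monotonicity of $G$ from Lemma \ref{increasing lemma}.

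First I would restrict to the outer radial envelope $C = \{R_u u : u \in S^{n-1}\}$, which sits inside $\partial \tilde{\Omega}$ by the definition of $R_u$ as the supremum of radii in each direction. Parametrizing $C$ by $u \in S^{n-1}$ and writing $dS = \sec\theta(u)\, R_u^{n-1}\, du$, where $\theta(u)$ is the angle between the outward normal and the radial direction, then dropping the $\sec\theta \geq 1$ factor yields
\[
\int_{\partial\tilde{\Omega}} f_1^2(\|x\|)\, dS \;\geq\; \int_C f_1^2(\|x\|)\, dS \;\geq\; \int_{S^{n-1}} f_1^2(R_u)\, R_u^{n-1}\, du.
\]
Since $f_1(R_1) = 0$, the fundamental theorem of calculus rewrites $f_1^2(R_u)\, R_u^{n-1} = \int_{R_1}^{R_u} G(r)\, r^{n-1}\, dr$, so the spherical integral equals the volume integral of $G(\|x\|)$ over the star-shaped hull $\{r u : R_1 \leq r \leq R_u,\, u \in S^{n-1}\}$. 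This hull contains $\Omega$ along every ray, and since $G > 0$ on $(R_1, \infty)$, the spherical integral is bounded below by $\int_\Omega G(\|x\|)\, dV$.

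Finally I would compare $\int_\Omega G(\|x\|)\, dV$ with $\int_{\Omega_0} G(\|x\|)\, dV$ by rearrangement. Writing both domains as their intersection plus the respective symmetric difference reduces the comparison to showing $\int_{\Omega \setminus \Omega_0} G(\|x\|)\, dV - \int_{\Omega_0 \setminus \Omega} G(\|x\|)\, dV \geq 0$. On $\Omega \setminus \Omega_0$ one has $\|x\| > R_2$, hence $G(\|x\|) \geq G(R_2)$ by the monotonicity of $G$; on $\Omega_0 \setminus \Omega$ one has $\|x\| < R_2$, hence $G(\|x\|) \leq G(R_2)$. The volume equality Vol$(\tilde{\Omega}) =$ Vol$(B_{R_2})$, which forces Vol$(\Omega \setminus \Omega_0) =$ Vol$(\Omega_0 \setminus \Omega)$, then gives the desired inequality. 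Reversing the radial identity $G(r)\, r^{n-1} = \frac{d}{dr}(f_1^2(r)\, r^{n-1})$ on the clean annulus $\Omega_0$ closes the loop with $\int_{\Omega_0} G(\|x\|)\, dV = \int_{\partial B_{R_2}} f_1^2(\|x\|)\, dS$.

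The main obstacle is the opening reduction: if $\tilde{\Omega}$ fails to be star-shaped, the envelope $C$ may be geometrically irregular and the map $u \mapsto R_u u$ need not be injective onto a smooth piece of $\partial \tilde{\Omega}$, so some care is required to interpret $\int_C f_1^2\, dS$ and to justify that it is dominated by the full boundary integral with the correct Jacobian $\sec\theta$. Once this is accepted, the remainder is driven entirely by the positivity and monotonicity of $G$ together with the volume constraint, both of which are immediate from earlier results.
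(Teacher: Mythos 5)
Your proposal is correct and follows essentially the same route as the paper's own proof: restrict to the radial envelope $C$ and drop the $\sec\theta$ factor, convert the spherical integral to $\int_\Omega G(\|x\|)\,dV$ via the fundamental theorem of calculus and positivity of $G$, then compare with $\int_{\Omega_0} G(\|x\|)\,dV$ using the monotonicity of $G$ at the threshold $R_2$ together with the equality of the volumes of the symmetric differences, and finally undo the radial identity on the annulus. Nothing further is needed.
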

  
\section{Bounds for Higher eigenvalues} \label{sec: higher eigenvalues}
Recall from section \ref{sec:integral inequalities} that $\tilde{\Omega}$ is a 
smooth bounded domain in $\mathbb{R}^n$, $B_{R_1}$ is a ball of radius $R_1$ such that $B_{R_1} \subset \tilde{\Omega}$. Without loss of generality, we had assumed that, $\tilde{\Omega}$ contains the origin of $\mathbb{R}^n$ and that $B_{R_1}$ is centered at the origin. We also recall that $B_{R_2}$ is a ball of radius $R_2$ centered at the origin such that Vol$(B_{R_2})$ = Vol$(\tilde \Omega)$. Clearly, $R_2 > R_1$. Define $\Omega:= \tilde{\Omega} \backslash \overline{B_{R_1}}$ and 
$\Omega_0:= B_{R_2} \backslash \overline{B_{R_1}}$. Then, Vol$(\Omega) = $Vol$(\Omega_0)$.

We consider the following Steklov-Dirichlet eigenvalue problem on $\Omega$:
\begin{equation}
    \begin{cases}
        \Delta u=0 & \text{in} \,\, \Omega,\\
        u=0 & \text{on}\,\, \partial B_{R_1},\\
        \frac{\partial u}{\partial \nu }=\sigma u & \text{on} \,\, \partial \tilde{\Omega.}  \label{eq:Steklov-Dirichlet}
    \end{cases}
\end{equation}

For each $1 \leq k < \infty$,  
the $k$-th eigenvalue of (\ref{eq:Steklov-Dirichlet}), viz. $\sigma_{k}$, 
admits the following variational charactrization 
\begin{equation}  \label{charactrizaton}
    \sigma_{k}(\Omega):=\min_{E\in \mathcal{H}_{k, 0}(\Omega)} \max_{u(\neq 0) \in E} R(u),
\end{equation}
where $R(u):= \frac{\displaystyle \int_{\Omega} \| \nabla u \|^2 dv}{\displaystyle \int_{\partial\tilde{\Omega}} \|u\|^2 ds}$, and ${\mathcal{H}_{k,0}(\Omega)}$ is the collection of all the $k$-dimensional subspaces of the Sobolev space $\Tilde{H}_{0}^1  (\Omega):=\{ u \in H^1(\Omega) : u=0 \,\,\text{on} \,\, \partial B_{R_1}\}$.

We now state our main theorem:
\begin{theorem} \label{Thm:higher eigenvalues}
    Let $\tilde{\Omega}$ be a 
bounded smooth domain in $\mathbb{R}^n$ having symmetry of order 4 with respect to the origin. Let $\Omega=\tilde{\Omega} \backslash \overline{B_{R_1}}$ 
and let $\sigma_{k} $ be the $k${th}  eigenvalue of (\ref{eq:Steklov-Dirichlet}) on $\Omega$. Then, for $2 \leq k \leq n+1,$
    \begin{equation}\label{volumeconstraint}
        \sigma_{k}(\Omega) \leq \sigma_{k}(\Omega_0) = \sigma_2(\Omega_0),
    \end{equation} 
    where $\Omega_0= B_{R_2} \backslash \overline{B_{R_1}}$, the concentric annulus in $\mathbb{R}^n$ 
    with the constraint that 
    Vol$(B_{R_2})$ = Vol$(\tilde \Omega)$. 
\end{theorem}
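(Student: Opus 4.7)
The plan is to use the min-max characterization \eqref{charactrizaton}. Since $\sigma_k(\Omega) \leq \sigma_{n+1}(\Omega)$ for all $k\leq n+1$ and $\sigma_k(\Omega_0)=\sigma_2(\Omega_0)$ throughout this range (Remark), it suffices to exhibit an $(n+1)$-dimensional subspace $E\subset\tilde{H}^1_0(\Omega)$ on which $R(u)\leq \sigma_2(\Omega_0)$. I would take
$$E:=\operatorname{span}\Bigl\{f_1(\|x\|),\ \tfrac{f_1(\|x\|)}{\|x\|}x_1,\ \ldots,\ \tfrac{f_1(\|x\|)}{\|x\|}x_n\Bigr\},$$
with $f_1$ as in \eqref{eqn: eigenfuction}. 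Each basis element vanishes on $\partial B_{R_1}$ because $f_1(R_1)=0$, and linear independence is immediate from parity considerations (the first function is even in every coordinate, while the $i$th of the remaining is odd in $x_i$).

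The core step is to use the order-$4$ symmetry of $\tilde\Omega$ to diagonalize simultaneously the Dirichlet and boundary quadratic forms on $E$. Writing $u=c_0 f_1(\|x\|)+\sum_{i=1}^n c_i\tfrac{f_1(\|x\|)}{\|x\|}x_i$, Corollary \ref{cor:integral expression} kills every cross term between distinct basis vectors (all four items of the corollary get used, together with the identities \eqref{eq: inner product of gradient1}, \eqref{eq: inner product of gradient2}), and Lemma \ref{constant lemma} together with its boundary analogue ensures that the diagonal entries attached to $\tfrac{f_1 x_i}{\|x\|}$ are independent of $i$. Thus
$$R(u)\;=\;\frac{c_0^2\, A_0+\bigl(\sum_i c_i^2\bigr) A_1}{c_0^2\, B_0+\bigl(\sum_i c_i^2\bigr) B_1}\;\leq\; \max\!\Bigl(\tfrac{A_0}{B_0},\ \tfrac{A_1}{B_1}\Bigr),$$
with $A_0=\int_\Omega(f_1'(\|x\|))^2\,dV$, $B_0=\int_{\partial\tilde\Omega}f_1^2\,dS$, and $A_1,B_1$ the common diagonal values.

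It remains to verify $A_0/B_0,\,A_1/B_1\leq \sigma_2(\Omega_0)$. For the second ratio: summing $\|\nabla(\tfrac{f_1 x_i}{\|x\|})\|^2$ over $i$ via \eqref{eq: inner product of gradient1} and $\sum_i x_i^2=\|x\|^2$ collapses to $F(\|x\|)$, so $n A_1=\int_\Omega F\,dV$; likewise $n B_1=\int_{\partial\tilde\Omega} f_1^2\,dS$. Lemmas \ref{lem: integral on domain} and \ref{lem:integral on boundary}, combined with the explicit formula for $\sigma_{(1)}(\Omega_0)$ recorded in the Remark, then yield $A_1/B_1\leq \sigma_2(\Omega_0)$. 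For the first ratio, a direct computation gives $(f_1'(r))^2=(1+(n-1)(R_1/r)^n)^2$, which is a decreasing radial function on $[R_1,\infty)$; consequently the argument of Lemma \ref{lem: integral on domain} applies verbatim with $F$ replaced by $(f_1')^2$, yielding $A_0\leq\int_{\Omega_0}(f_1')^2\,dV$. Combining this with the pointwise bound $(f_1')^2\leq F$ and Lemma \ref{lem:integral on boundary} gives $A_0/B_0\leq\sigma_2(\Omega_0)$ as well.

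The main obstacle I anticipate is the careful bookkeeping of the decoupling step: one has to confirm that every off-diagonal pairing (in both the Dirichlet and the boundary integrals) vanishes, pointing to the appropriate item of Corollary \ref{cor:integral expression} or Proposition \ref{prop:integral expression} at each point, and one must verify that the diagonal entries for the $\tfrac{f_1 x_i}{\|x\|}$ coincide (this is where order-$4$ symmetry is essential; order-$2$ alone would not force $i$-independence). Once the decoupling is in place, the two scalar comparisons $A_0/B_0,\,A_1/B_1\leq\sigma_2(\Omega_0)$ reduce directly to the integral inequalities of Section \ref{sec:integral inequalities}.
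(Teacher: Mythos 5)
Your proposal is correct and follows essentially the same route as the paper: the same $(n+1)$-dimensional test space spanned by $f_1$ and $\tfrac{f_1 x_i}{\|x\|}$, the same decoupling of the two quadratic forms via Corollary \ref{cor:integral expression} and Lemma \ref{constant lemma}, and the same final comparison with $\sigma_2(\Omega_0)$ via Lemmas \ref{lem: integral on domain} and \ref{lem:integral on boundary}. The only (harmless) divergence is in handling $A_0/B_0$: the paper just uses the pointwise bound $(f_1'(\|x\|))^2\leq F(\|x\|)$ on $\Omega$ to get $A_0/B_0\leq A_1/B_1$, so the maximum collapses to the single ratio $A_1/B_1$, whereas you rerun the rearrangement argument of Lemma \ref{lem: integral on domain} for the decreasing function $(f_1')^2$; both are valid.
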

\begin{proof}
    Since $\sigma_i(\Omega_0) = \sigma_2(\Omega_0), 2 \leq i \leq n+1$, it is enough to prove that $\sigma_{n+1}(\Omega) \leq \sigma_{n+1}(\Omega_0) = \sigma_2(\Omega_0)$. In order to prove this inequality, we plug in a certain test functions in the variational charactrization (\ref{charactrizaton}) of $\sigma_{n+1}(\Omega)$. Consider the following $(n+1)$-dimensional subspace of $\Tilde{H}_ 0^1(\Omega)$,     
    \begin{equation*}
        E= \text{span} \bigg\{ f_1
        , \frac{x_1
        }{\|x\|}f_1 \dots \frac{x_n
        }{\|x\|}f_1 \bigg\},
    \end{equation*}
where $f_1$ is as define in (\ref{eqn: eigenfuction}) with $l=1$. Now, For any $u \in E \backslash \{0\}$, there exist $c_0, c_1, \dots c_n \in \mathbb{R},$ not simultaneously equal to zero, such that 
    \begin{equation*}
        u=c_0\, f_1 + c_1\,\frac{x_1}{\|x\|}f_1 + \dots + c_n\, \frac{x_n}{\|x\|}f_1.
    \end{equation*}
Then, using Corollary \ref{cor:integral expression}, we get 
   \begin{align}
       \frac{\displaystyle \int_\Omega \| \nabla u \|^2 dV}{\displaystyle \int_{\partial \tilde{\Omega}} u^2 dS}= \frac{c_0^2 \displaystyle \int_\Omega \|\nabla f_1(\|x\|)\|^2 \, dV+ \displaystyle \sum_{i=1}^nc_i^2 \displaystyle \int_\Omega \bigg \| \nabla \left( \frac{f_1(\|x\|)}{\|x\|}  x_i \right) \bigg \|^2 \, dV}{c_0^2 \displaystyle \int_{\partial \tilde{\Omega}}f_1^2(\|x\|) \, dS + \displaystyle \sum_{i=1}^n c_i^2 \displaystyle \int_{\partial \tilde{\Omega}} \frac{f_1^2(\|x\|)}{\|x\|^2}x_i^2 \, dS}. \label{equality}
   \end{align} 
By Lemma \ref{constant lemma}, there exist constants $A_1, A_2 > 0$ such that for all natural numbers $1 \leq i \leq n$,
 \begin{align*}
       \int_{\partial \tilde{\Omega}}\left( \frac{f_1(\|x\|)}{\|x\|}x_i \right)^2 dS & =\int_{\partial \tilde{\Omega}} \frac{f_1^2(\|x\|)}{\|x\|^2}x_i^2 dS = A_1,\\
       \int_\Omega \bigg \| \nabla \left( \frac{f_1(\|x\|)}{\|x\|} x_i \right) \bigg \|^2 dV & = \int_\Omega \left( \frac{(f'_1(\|x\|))^2}{\|x\|^2} x_i^2 - \frac{f_1^2(\|x\|)}{\|x\|^4}x_i^2 + \frac{f_1^2(\|x\|)}{\|x\|^2} \right) dV = A_2.
   \end{align*}
Therefore,
      $$ n\,A_1 = \sum_{i=1}^n \int_{\partial \tilde{\Omega}} \left( \frac{f_1(\|x\|)}{\|x\|}x_i \right)^2 \, dS = \int_{\partial \tilde{\Omega}} f_1^2(\|x\|) \, dS, $$ and
       $$n\,A_2  = \sum_{i=1}^n  \int_\Omega \left( \frac{(f'_1(\|x\|))^2}{\|x\|^2} x_i^2 - \frac{f_1^2(\|x\|)}{\|x\|^4}x_i^2 + \frac{f_1^2(\|x\|)}{\|x\|^2} \right) dV 
        = \int_\Omega \left( (f'_1(\|x\|))^2 + \frac{(n-1)}{\|x\|^2} f_1^2(\|x\|)\right) \, dV.$$
Thus, for all natural numbers $1 \leq i \leq n$, we have 
   \begin{equation} \label{sum 1} 
       \displaystyle \int_{\partial \tilde{\Omega}} \left( \frac{f_1(\|x\|)}{\|x\|}x_i \right)^2 \, dS  = A_1 = \frac{1}{n} \displaystyle \int_{\partial \tilde{\Omega}} f_1^2(\|x\|) \, dS
        \end{equation}
         \begin{equation} \label{sum 1.5} 
        \displaystyle \int_\Omega \bigg \| \nabla \left( \frac{f_1(\|x\|) x_i}{\|x\|}\right) \bigg \|^2 \, dV  = A_2 = \frac{1}{n} \displaystyle \int_\Omega \left( (f'_1(\|x\|))^2 + \frac{(n-1)}{\|x\|^2} f_1^2(\|x\|)\right) \, dV.
         \end{equation} 
Now, from (\ref{equality}), \eqref{sum 1} and  \eqref{sum 1.5}, we get 
   \begin{align}
       \frac{\displaystyle \int_\Omega \| \nabla u \|^2 dV}{\displaystyle \int_{\partial \tilde{\Omega}} u^2 \, dS} = \frac{c_0^2 \displaystyle \int_\Omega \|\nabla f_1(\|x\|)\|^2 \, dV + A_2 \displaystyle \sum_{i=1}^n c_i^2}{c_0^2 \displaystyle \int_{\partial \tilde{\Omega}}f_1^2(\|x\|) \, dS + A_1 \displaystyle \sum_{i=1}^n c_i^2 }
        \leq \text{max} \Bigg \{\frac{\displaystyle \int_\Omega \|\nabla f_1(\|x\|)\|^2 \, dV}{\displaystyle \int_{\partial \tilde{\Omega}}f_1^2(\|x\|) \, dS}, \frac{A_2}{A_1} \Bigg \}. \label{gradient inequality 1}
   \end{align}
Now 
\begin{align*}
    \frac{A_2}{A_1} & = \frac{\displaystyle \int_\Omega \left( (f'_1(\|x\|))^2 + \frac{(n-1)}{\|x\|^2} f_1^2(\|x\|)\right) dV}{\displaystyle \int_{\partial \tilde{\Omega}} f_1^2(\|x\|) \, dS} 
                     \geq \frac{\displaystyle \int_\Omega (f'_1(\|x\|))^2 \, dV}{\displaystyle \int_{\partial \tilde{\Omega}} f_1^2(\|x\|) \, dS} 
                     = \frac{\displaystyle \int_\Omega \|\nabla f_1(\|x\|)\|^2 \, dV}{\displaystyle \int_{\partial \tilde{\Omega}}f_1^2(\|x\|) \, dS}.
\end{align*}
   Then from the inequality (\ref{gradient inequality 1}) we get
   \begin{align}
       \frac{\displaystyle \int_\Omega \| \nabla u \|^2 dV}{\displaystyle \int_{\partial \tilde{\Omega}} u^2 \, dS} \leq  \frac{A_2}{A_1} =\frac{\displaystyle \int_\Omega \left( (f'_1(\|x\|))^2 + \frac{(n-1)}{\|x\|^2} f_1^2(\|x\|)\right) \, dV }{\displaystyle \int_{\partial \tilde{\Omega}} f_1^2(\|x\|) \, dS}. \label{gradient inequality}
    \end{align}
Next, using the Lemmas \ref{lem: integral on domain} and \ref{lem:integral on boundary}, we get  
   \begin{align*}
       \frac{A_2}{A_1} =\frac{ \displaystyle \int_\Omega \left( (f'_1(\|x\|))^2 + \frac{(n-1)}{\|x\|^2} f_1^2(\|x\|)\right) \, dV }{\displaystyle \int_{\partial \tilde{\Omega}} f_1^2(\|x\|) \, dS} \leq \frac{\displaystyle \int_{\Omega_{0}} \left( (f'_1(\|x\|))^2 + \frac{(n-1)}{\|x\|^2} f_1^2(\|x\|)\right) \, dV }{\displaystyle \int_{\partial B_{R_2} }f_1^2(\|x\|) \, dS} = \sigma_2(\Omega_0).
   \end{align*}
Therefore, from the variational charactrization (\ref{charactrizaton}) and inequality (\ref{gradient inequality}), we conclude,
   \begin{align}
      \sigma_{(n+1)}(\Omega) \leq \max_{u(\neq 0) \in E} \frac{\displaystyle \int_\Omega \| \nabla u \|^2 \, dV}{\displaystyle \int_{\partial \tilde{\Omega}} u^2 \, dS} \leq \sigma_2(\Omega_0).
   \end{align}
This completes the proof of Theorem \ref{Thm:higher eigenvalues}.
\end{proof}

\begin{remark}
Take $\tilde{\Omega}$ to be the open ellipse with major axis of length $9.5$ cm and minor axis of length $10.526$ cm centered at the origin. Let the ball $B:=B_2((3,5))$ be the ball centered at $(3,5)$ having radius $2$ cm be our inner domain $B_{R_1}$. Then, the second and the third Steklov-Dirichlet eigenvalues of the domain $\Omega = \tilde{\Omega} \setminus \overline{B}$ have the following values: $\sigma_2(\Omega) = 0.110414$ and $\sigma_3(\Omega) = 0.15868$. However, if we consider the concentric annual domain with outer ball $B_{10}$, a ball centered at the origin having radius $10$ cm 
and, with inner ball $B_2$, again centered at the origin, having radius $2$ cm, then $\sigma_2 (B_{10} \setminus \overline{B_2} ) = \sigma_3 (B_{10} \setminus \overline{B_2} ) = 0.108334$. Note here that, Vol$(\Omega)$ = Vol$(B_{10} \setminus \overline{B_2})$ and $\sigma_2(\Omega) > \sigma_2 (B_{10} \setminus \overline{B_2} )$. Thus we observe that the annular domain does not maximize the 2nd and 3rd eigenvalue among all doubly connected domains of fixed volume even if radius of inner ball is fixed. 
Therefore Theorem \ref{Thm:higher eigenvalues} may not hold if we drop the symmetry assumption on the domain $\Omega$.
\end{remark}

\section{Concluding Remarks} \label{sec:remark on star shaped}
In this section, we state a 
generalization of Theorem  \ref{Thm:upper bound in convex domains}. 
\begin{enumerate}
    \item The following theorem extends Theorem \ref{Thm:upper bound in convex domains} of \cite{gavitone2023isoperimetric} from bounded convex domains to bounded star shaped domains in $\mathbb{R}^{n}, n \geq 2$. It can be proved using exactly the same arguments.

\begin{theorem} \label{Thm:upper bound}
 Let $\tilde{\Omega}$ 
 be a bounded domain in $\mathbb{R}^{n}$ which is star shaped with respect to the origin. Let $B_{R_1} \subset \mathbb{R}^{n}$ be a ball of radius $R_1$ centered at the origin such that $\overline{B_{R_1}} \subset \tilde{\Omega}$. Define \begin{equation*}
     \overline{R}_1 := 
     \begin{cases}
     \begin{aligned}
& R_1 e^{\sqrt{2}} & \text{ if }  n=2,\\
& R_1 \left[ \frac{(n-1) + (n-2) \sqrt{2(n-1)}}{n-1} \right]^{\frac{1}{n-2}} & \text{ if }  n \geq 3.
 \end{aligned}
     \end{cases}
 \end{equation*} Let $B_{\overline{R}_1}$ 
 be the ball in $\mathbb{R}^{n}$ centered at the origin with radius $\overline{R}_1$. We further assume that $\tilde{\Omega} \subset B_{\overline{R}_1}$, 
Then,
 \begin{equation*}
     \begin{aligned}
\sigma_1 (\tilde{\Omega} \backslash \overline{B_{R_1}}) \leq \sigma_1 (B_{R_2} \backslash \overline{B_{R_1}}),  
     \end{aligned}
 \end{equation*}
 where $B_{R_2}$ is the ball of radius $R_2$ such that $\overline{B_{R_1}} \subset B_{R_2}$ and Vol$(\tilde{\Omega})$ = Vol$(B_{R_2})$.
\end{theorem}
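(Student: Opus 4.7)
I would follow the strategy of the convex-case Theorem~\ref{Thm:upper bound in convex domains} of \cite{gavitone2023isoperimetric} essentially unchanged: in that argument, convexity enters only through the inclusion $\tilde{\Omega} \subset B_{\overline{R}_1}$ and the volume equality, both of which remain as hypotheses in our star-shaped setting. The test function I take is $v(x) := f_0(\|x\|)$, the radial first Steklov-Dirichlet eigenfunction of the annulus $\Omega_0$ from Section~\ref{sec: SD on annular domain}. Since $f_0(R_1) = 0$, the function $v$ lies in $\tilde{H}_0^1(\Omega)$, so the variational characterization~\eqref{charactrizaton} of $\sigma_1(\Omega)$ gives $\sigma_1(\Omega) \leq R(v)$ with $R(v) := \int_\Omega \|\nabla v\|^2 \, dV \big/ \int_{\partial \tilde{\Omega}} v^2 \, dS$.

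Next, I would collapse both integrals to $S^{n-1}$. Since $v$ is harmonic on $\mathbb{R}^n \setminus \{0\}$ and vanishes on $\partial B_{R_1}$, Green's identity reduces the numerator to $\int_{\partial \tilde{\Omega}} v\, \partial_\nu v\, dS$. Using the star-shaped parametrization $\partial \tilde{\Omega} = \{r_u u : u \in S^{n-1}\}$, the identities $\partial_\nu v = f_0'(r_u)\cos\theta_u$ and $dS = \sec\theta_u\, r_u^{n-1}\, du$, together with the algebraic simplification $f_0(r)\, f_0'(r)\, r^{n-1} = (n-2)\, f_0(r)$ (for $n \geq 3$), I find
\[ R(v) \;=\; \frac{(n-2)\int_{S^{n-1}} f_0(r_u)\, du}{\int_{S^{n-1}} f_0^2(r_u)\, \sec\theta_u\, r_u^{n-1}\, du}. \]
A parallel computation on $\Omega_0$ yields $\sigma_1(\Omega_0) = (n-2)\big/[f_0(R_2)\, R_2^{n-1}]$, so after discarding $\sec\theta_u \geq 1$ from the denominator, the desired inequality $R(v) \leq \sigma_1(\Omega_0)$ reduces to
\[ \int_{S^{n-1}} \Big( f_0^2(r_u)\, r_u^{n-1} - f_0(R_2)\, R_2^{n-1}\, f_0(r_u) \Big)\, du \;\geq\; 0. \]

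The final step, which is where I expect the main obstacle to lie, is to establish this integral inequality by producing a constant $a \in \mathbb{R}$ such that the pointwise inequality
\[ \Psi(r) \;:=\; f_0^2(r)\, r^{n-1} - f_0(R_2)\, R_2^{n-1}\, f_0(r) - a\,(r^n - R_2^n) \;\geq\; 0 \]
holds throughout $[R_1, \overline{R}_1]$, with $a$ chosen so that $r = R_2$ is a double zero of $\Psi$. Integrating $\Psi(r_u)$ over $S^{n-1}$ and invoking the volume equality $\int_{S^{n-1}} r_u^n\, du = R_2^n\, |S^{n-1}|$ then cancels the $a$-term and concludes the proof. The expression for $\overline{R}_1$ in the theorem is exactly the largest radius for which $\Psi \geq 0$ persists on $[R_1, \overline{R}_1]$, characterised by a tangency condition at the left endpoint $r = R_1$; extracting the closed-form expression is an algebraic exercise identical to the one carried out in~\cite{gavitone2023isoperimetric}, and the case $n = 2$ is handled in the same way after replacing $f_0$ by $\log(r/R_1)$.
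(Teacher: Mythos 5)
Your reduction is the intended one: the paper offers no proof of Theorem \ref{Thm:upper bound} beyond the remark that the argument of \cite{gavitone2023isoperimetric} carries over verbatim, and that argument does proceed exactly as you describe — plug the radial annulus eigenfunction $v=f_0(\|x\|)$ into the Rayleigh quotient and collapse everything onto $S^{n-1}$. Your computations up to the displayed integral inequality are correct: $\int_\Omega\|\nabla v\|^2\,dV=(n-2)\int_{S^{n-1}}f_0(r_u)\,du$ because $f_0'(r)\,r^{n-1}=n-2$ makes the $\cos\theta_u\sec\theta_u$ factors cancel, discarding $\sec\theta_u\ge 1$ from the denominator only increases the quotient, and $\sigma_1(\Omega_0)=(n-2)/\bigl(f_0(R_2)R_2^{\,n-1}\bigr)$.

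The gap is the final step, which is the only place the hypothesis $\tilde\Omega\subset B_{\overline R_1}$ enters, and your account of it is not right. You assert $\Psi\ge 0$ on $[R_1,\overline R_1]$ with $\overline R_1$ ``characterised by a tangency condition at the left endpoint $r=R_1$''; but $\Psi(R_1)=a\,(R_2^n-R_1^n)>0$ for the relevant $a>0$, so nothing is tangent at $R_1$, and the pointwise inequality is precisely the content of the radius restriction — it cannot be dismissed as an algebraic exercise. The correct mechanism is a convexity argument in the variable $t=r^n$. The map $t\mapsto f_0(t^{1/n})$ is concave for all $t$, while $t\mapsto f_0^2(t^{1/n})\,t^{(n-1)/n}$ has $t$-derivative $\tfrac1n G_0(r)$ with $G_0(r)=2f_0f_0'+(n-1)f_0^2/r$; using $f_0''=-\tfrac{n-1}{r}f_0'$ one gets $G_0'(r)=2(f_0')^2-(n-1)f_0^2/r^2$, which is $\ge 0$ exactly when $(r/R_1)^{n-2}\le \frac{(n-1)+(n-2)\sqrt{2(n-1)}}{n-1}$ for $n\ge3$ (and $\ln(r/R_1)\le\sqrt2$ for $n=2$), i.e.\ exactly when $r\le\overline R_1$. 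Hence on $[R_1,\overline R_1]$ your $\Psi$, viewed as a function of $t$, is convex (it is a convex function minus a positive multiple of a concave one minus an affine one), vanishes at $t=R_2^n$, and has vanishing $t$-derivative there by your choice of $a$; a convex function with an interior critical zero is nonnegative, which closes the argument. Equivalently, you can bypass $\Psi$ entirely and apply Jensen twice with respect to the normalized measure on $S^{n-1}$ and the constraint $\frac{1}{|S^{n-1}|}\int_{S^{n-1}}r_u^n\,du=R_2^n$, obtaining $\frac{1}{|S^{n-1}|}\int_{S^{n-1}}f_0(r_u)\,du\le f_0(R_2)$ and $\frac{1}{|S^{n-1}|}\int_{S^{n-1}}f_0^2(r_u)r_u^{n-1}\,du\ge f_0^2(R_2)R_2^{\,n-1}$, whose quotient is the desired bound. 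With this step supplied, your proof is complete.
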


\item In future, we are planning to prove Theorems \ref{Thm:higher eigenvalues} and \ref{Thm:upper bound} for domains contained in simply connected space forms.
\end{enumerate}

\textbf{Acknowledgement:} S Basak is supported by University Grants Commission, India. A Chorwadwala acknowledges the SERB MATRICS grant sanction order No. MTR/2019/001309. The corresponding author S. Verma acknowledges the project grant provided by SERB-SRG sanction order No. SRG/2022/002196. 

\bibliographystyle{plain}
\bibliography{Ref}

\end{document}